\journal{Ecological Complexity}
\newcommand{\R}{{\Bbb R}}
\DeclareMathOperator{\Prob}{\mathrm{Prob}}
\newtheorem{theorem}{Theorem}[section]
\newtheorem{lemma}[theorem]{Lemma}
\newtheorem{proposition}[theorem]{Proposition}
\newtheorem{corollary}[theorem]{Corollary}
\newtheorem{assumption}[theorem]{Assumption}
\newtheorem{principle*}{Decrease Biodiversity Principle}
\theoremstyle{definition}
\newtheorem{example}{Example}
\theoremstyle{remark}
\newtheorem{remark}[theorem]{Remark}
\begin{document}

\begin{frontmatter}

\title{Biodiversity and robustness of large ecosystems
}


\author[kozlov]{Vladimir Kozlov}
\ead{vladimir.kozlov@liu.se}

\author[vakula]{Serge Vakulenko}
\ead{vakulenfr@gmail.com}

\author[uno]{Uno Wennergren}
\ead{Uno.Wennergren@liu.se}


\author[kozlov]{Vladimir Tkachev\corref{mycorrespondingauthor}}
\cortext[mycorrespondingauthor]{Corresponding author}
\ead{vladimir.tkatjev@liu.se}

\address[kozlov]{Department of Mathematics, Link\"oping University, Sweden}
\address[vakula]{Inst. for Mech. Eng. Problems, Russian Acad. Sci.,  St. Petersburg,   ITMO University, St. Petersburg and Saint Petersburg State University of Industrial Technology and Design }
\address[uno]{Department of Physics, Chemistry, and Biology, Link\"oping University, Sweden}

\begin{abstract}
We study the  biodiversity  problem for resource
competition systems with extinctions and self-limitation effects.
Our main result establishes  estimates of biodiversity in terms of the  fundamental parameters of the model. We also prove the global stability of solutions for systems with extinctions and large turnover rate.  We show that when the extinction threshold is distinct from zero, the large time dynamics of system  is fundamentally non-predictable. In the last part of the paper  we obtain explicit analytical estimates of ecosystem robustness with respect to variations of resource supply which support the $R^*$ rule for a system with random parameters.
\end{abstract}

\begin{keyword}
Foodweb\sep Biodiversity\sep Global stability\sep Extinction
threshold\sep Ecological networks\sep the $R^*$ rule
\end{keyword}

\end{frontmatter}



\section{Introduction} \label{intro}

Existence and stability of large foodwebs, where many species share
a few of resources,   is one of key problems in ecology
\cite{Hardin, Volterra, Hu61} as well as extinctions and mass
extinctions in such systems under  climate changes
\cite{Rothman}.  In this paper, we consider the model initiated in
\cite{KVU16}, \cite{KTVW17} describing   an ecological system, where
several (many) species  compete or fight for few limited resources.

The most typical examples are plant or plankton ecosystems.
Sunlight, water, nitrogen, phosphorus and iron are all abiotic
essential resources for phytoplankton and plant species.
Resource competition models  link the population
dynamics of competing species with the dynamics
of the resources. As it was mentioned in \cite{HuWe99}  an
attractive feature of resource competition models is that
they use the biological traits of species to predict the time
evolution of competition. In fact, many rigorous results
\cite{Hsu05,  Tilman1980, SmithWaltman}
show that, in general situation, a single species survives and to
obtain coexistence of many species one needs very special
assumptions to species parameters (mortalities and resource
consumption rates).  This paradox (the so-called paradox of plankton
\cite{Hu61, Hardin}) has received a great attention in past decades
\cite{Volm, Roy2007}. Several ways to explain
 the extreme diversity of phytoplankton communities  have
been proposed. In particular, the proposed mechanisms  include
spatial and temporal heterogeneity in physical and biological
environments,  horizontal turbulence of ocean,  oscillation and
chaos generated by several internal and external causes, stable
coexistence and compensatory dynamics under fluctuating temperature
in resource competition, and  toxin-producing
phytoplankton \cite{Volm, Roy2007}. Although the mathematical problem
has been studied for more than two decades it
is still far from to be well-understood. The most of available
results do not give   explicit estimates of biodiversity in terms of
the fundamental observable ecosystem parameters (such as species
mortality rates, rates of resource consumptions, resource supply and
resource turnover rate).

The main goal of this paper is to present such estimates.  To this
end we consider dynamical equations are close to the model
equations, which considered in
the seminal paper \cite{HuWe99}  but  extend that model  in the two
aspects. First, \  we take into account
 self-limitation effects  (which are important for plankton
 populations \cite{Roy2007} and to explain stability of large foodwebs
\cite{Alles2, Alles1}). Roughly speaking when we introduce a weak
self-limitation we replace equations of Maltus type on Verhulst type
equations. Second, following \cite{KVU16} we take into account
species extinction thresholds, however, in contrast to
\cite{KVU16,VakSud} we consider here the case of a few resources.
Mathematically, our approach with extinction thresholds and
self-limitation terms can be considered as a regularization of
resource competition models.

Our main results can now be formulated as follows. A summary of
the mathematical framework and the global stability results established earlier in \cite{KVU17a}, \cite{KTVW17} for the model with zero extinction
threshold  is collected in Sections~\ref{model} and \ref{Gbstab}. In
that case, a complete  description of the system large time
behaviour is obtained for systems with sufficiently large turnover rates and without extinctions. More precisely, the model exhibits the \textit{global stability}: all positive trajectories converge to the same equilibrium state. This result holds due to two principal properties of our system. First, the system has a typical fast/slow structure for large turnover rates. Second, the system obeys a monotonicity property: if resources increase then species abundances also increase. We recall the principal ideas of the proof at the end of section~ \ref{equ}.

Next, if one allows
even small positive extinction threshold, the ecosystem  behaviour
exhibits  new interesting  effects. We study this  in sections~
\ref{ext} and \ref{bioest} below. We establish a weaker
stability result: the limit equilibrium state still exists but it
depends on the initial ecosystem  state. This in particular implies that there can a priori exist several distinct equilibrium states.

In section~\ref{bioest}, we establish explicit upper and below estimates of biodiversity expressed in terms of the fundamental ecosystem parameters  (such as species mortalities, resource consuming rate etc.). Remarkably, the obtained estimates are universal for small extinction thresholds and
self-limitation parameters. We point out that these results use no assumptions on the system dynamics and do not use our theorem on global stability.

In the  part of the paper, we study large ecosystems with random fundamental parameters. Here the main assumption  is that the system dynamics has no oscillating or chaotic regimes. Note that it follows from Theorem~\ref{Theor1}, that the assumption is automatically holds if, for example, turnovers rates are large enough. Recall that the $R^*$ rule (also called the resource-ratio hypothesis) is a hypothesis in community ecology that attempts to predict which species will become dominant as the result of competition for resources. It predicts that if multiple species are competing for a single limiting resource, then species, which survive at the lowest equilibrium resource level,  outcompete all other species \cite{Til2}, \cite{Fisher}. In section~\ref{Rstar} we  obtain a complete description of parameters for  survived species and establish the validity of  the $R^*$ rule for systems with random parameters. We show that if the  resources are limited and initially the number of  species is sufficiently large then only species with  the fitness which is  close to maximal one  can survive. In our model, the  fitness is determined as the resource amounts available for an organism.

Finally, in section \ref{extmass}, we study sensitivity of those
states with respect to a change of environmental parameters.
This allows us to essentially extend  recent results of
\cite{Rothman}. Namely, not only the magnitude of environmental
changes and their rates determine how much species will extinct but
also the achieved biodiversity level, and some species parameters.
For example, ecosystems where the species parameters are localized
at some values are less stable than ecosystems with a large species
parameter variation.

\section*{The basic notation}

\begin{tabbing}
\hspace{55mm}\= \=\hspace*{70mm} \\
$x(t)=(x_1(t),\ldots,x_M(t))$\> the vector of species abundances \\
$v(t)=(v_1(t),\ldots,v_m(t))$\> the vector of  resource abundances \\
$\mu_i$, $\gamma_{i} $\>  the mortality and the self-limitation  constant of species $i$\\
$D_j$, $S_j$\> the turnover rate and the supply of resource $v_j$\\
$c_{ij}$\> the content  of resource $j$ in species $i$\\
$\phi_i$\> the specific growth rates  of species $i$\\
$K_{ij}$\> the half-saturation constant for resource $j$ of species $i$, page~\pageref{Kref}\\
$(x^{\mathrm{eq}},v^{\mathrm{eq}})$\> the special equilibrium state, page~\pageref{xvref}\\
$X_{\mathrm{ext}}^{(i)}$\> the extinction threshold of  species $i$, page~\pageref{Xex}\\
$N_e(t)$\> the number of species which exist at the time $t$, page~\pageref{Nref}
\end{tabbing}


\section{Preliminaries}  \label{model}

Given $x,y\in \mathbb{R}^n$ we  use the standard vector order
relation: $x\le y$ if $x_i\le y_i$ for all $1\le i\le n$,
$x< y$  if $x\le y$ and $x\ne y$, and $x\ll y$ if $x_i< y_i$ for all
$i$; $\R_{+}^n$ denotes the nonnegative cone $ \{x\in\R^n:x\ge 0\}$
and for $a\le b$, $a,b\in\R^n$
$$
[a,b]=\{x\in \R^{n}:a\le x\le b\}
$$
is the closed box with vertices at $a$ and $b$.

We consider the following system of equations:
\begin{align}
     \frac{dx_i}{dt}&=x_i (\phi_i(v)- \mu_i  -  \gamma_{i} \; x_i),
     \quad\quad\quad\quad\quad\! i=1,\dots, M,
    \label{HX1}\\
     \frac{dv_k}{dt}&=D_k(S_k -v_k)   -  \sum_{i=1}^M c_{ ki} \; x_i
     \; \phi_i(v), \quad k=1,\dots, m.
    \label{HV1}
     \end{align}
Here $x=(x_1, x_2,\ldots , x_M)$ is the vector of  species
abundances and $v=(v_1,\ldots , v_m)$ is a vector of
resource amounts, where $v_k$ is the resource of  $k$-th type
consumed by all ecosystem species, $\mu_i$ are the species
mortalities, $D_k >0$ are resource turnover rates, $S_k$ is the
supply of the resource $v_k$,  and
$c_{ik} >0$ is the content  of $k$-th resource in the $i$-th
species.
The coefficients $\gamma_{i} >0$ describe self-limitation effects
\cite{Roy2007}, \cite{KVU16}, \cite{KTVW17}.

We  consider general $\phi_j$  which are bounded, non-negative and
Lipshitz continuous
\begin{equation}
        |\phi_j(v) -\phi_j(\tilde v)| \le L_j \|v -\tilde v\|
\label{MM2a}
     \end{equation}
and
\begin{equation}
      \phi_k(v) =0,   \quad  \text{for all  $ k$ and $v \in \partial
      \mathbb{R}^m_{+}$}.
\label{MM2b}
     \end{equation}
 We use the norm notation $\|x\|=\max_{1\le i\le m}|x_i|$.

Furthermore, we shall assume   that each  $\phi_k(v)$ is a
non-decreasing function of each variable $v_j$ in $\mathbb{R}^M_{+}$.
This assumption means that as the amount of $j$-th resource
increases all the functions $\phi_l$ also increase.

Conditions \eqref{MM2b}  and (\ref{MM2a}) can be interpreted as a
generalization
of the well known von Liebig law, where
\begin{equation} \label{Liebm}
      \phi_i(v) =r_i\min \Big \{  \frac{ v_1}{K_{i1} +  v_1},
      \ldots,  \frac{ v_m}{K_{im} +  v_m}\Big  \}
     \end{equation}
where $r_{i}$ and $K_{ij}$  are positive coefficients, $i=1,\ldots ,
M$. Here, $r_i$ is the maximal level of the resource consumption rate by  $i$-th  species and $K_{ij}$ is the half-saturation constant for resource $j$ of species $i$.\label{Kref}

The Liebig law  can be considered as a generalization of  Holling functional response (Michaelis-Menten kinetics) for the case of many resources.  It assumes that the species growth is determined  by the scarcest resource (limiting factor). In particular, the Liebig law can  be applied to ecosystem models for resources such as  sunlight or mineral nutrients, for example, for plant ecosystems. For the case of a single resource $m=1$ and $v=v_1 \in \mathbb{R} $ it reduces to the Holling response. In this case, a typical example of $\phi_i$ satisfying all above conditions
is
\begin{equation} \label{Liebs}
      \phi_i(v) =  \frac{r_{i} v}{K_{i} +  v}, \quad  i=1, \ldots,
      M.
     \end{equation}

For $\gamma_{i}=0$   system  \eqref{HX1}, \eqref{HV1} was considered
in the studies of the  plankton paradox, see, for example,
\cite{HuWe99}.  Following \cite{Roy2007} and \cite{KTVW17} we assume
$\gamma_i >0$ since it is known that self-limitation is essential
for large ecosystem \cite{Alles1, Alles2} and
plankton or plant ecosystems can induce effects leading to
self-limitation.
We complement system \eqref{HX1},  \eqref{HV1} by non-negative
initial conditions
\begin{equation} \label{Idata}
x(0)=\bar x, \quad   v(0)=\bar v,
\end{equation}
where
\begin{equation}\label{K16a}
\bar x_i>0,\; i=1,\ldots,M,\;\;\mbox{and}\;\;0 \leq \bar v_k  \le
S_k, \quad  k=1, \ldots , m.
\end{equation}

\section{Estimates and equilibria  }  \label{Gbstab}

In this section, we study the stability and large time behavior of
solutions to the Cauchy problem \eqref{HX1}, \eqref{HV1} and \eqref{Idata}. Using the standard partial order relations in $\mathbb{R}^m$, we write  $v \le w$ if  $  v_i \le w_i$ for each $i$, and $v\ll w$ if $  v_i < w_i$ for each $i$. We also denote $z_{+}=\max\{z, 0\}$.

\subsection{Boundedness of solutions}
 Let  $S=(S_1,\ldots,S_m)$. The proof of the following estimates can be found in  \cite{KTVW17}.

\begin{proposition} \label{prop1}
Solution $(x, v)$ of \eqref{HX1}, \eqref{HV1} with  initial data
\eqref{K16a}
 is well defined for all positive $t$,  and it satisfies the
 estimates
\begin{equation}
 0\le x_i(t) \le \frac  {  \bar x_i \exp(\bar a_i t) }{1 +  \bar x_i
 \gamma_i \bar a_i^{-1}(\exp(\bar a_i t)  -1)}, \quad i=1,\ldots ,
 M,
\label{estx}
     \end{equation}
where $\bar a_i =\phi_i(S) - \mu_i$, and
\begin{equation}
 0\le v_k(t) \le    S_k (1- \exp(-D_kt)) +  \bar v_k  \exp(-D_kt),
 \quad k=1,\ldots , m.
\label{estv}
     \end{equation}
Furthermore, if $\phi_k(S)\leq\mu_k$ for some $k$ then
$\lim_{t\to\infty}x_k(t)=0$.
\end{proposition}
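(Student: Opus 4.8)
The plan is to combine a standard local existence argument with elementary one-dimensional differential inequalities for the individual components $x_i$ and $v_k$, and then to bootstrap the resulting a priori bounds to global existence in time. First, since each $\phi_i$ is Lipschitz by \eqref{MM2a} and the remaining terms of \eqref{HX1}--\eqref{HV1} are polynomial, the vector field is locally Lipschitz on $\R^M_+\times\R^m_+$, so the Picard--Lindel\"of theorem provides a unique solution on a maximal interval $[0,T_{\max})$; I would establish all the estimates on this interval and only then conclude $T_{\max}=\infty$.

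For nonnegativity I would note that the face $\{x_i=0\}$ is invariant (equivalently $x_i(t)=\bar x_i\exp\int_0^t(\phi_i(v)-\mu_i-\gamma_i x_i)\,ds>0$ since $\bar x_i>0$), and that each $v_k$ stays nonnegative: if $t_0\in[0,T_{\max})$ were the first time some component of $v$ vanishes, say $v_k(t_0)=0$, then $v(t_0)\in\partial\R^m_+$, so \eqref{MM2b} forces $\phi_i(v(t_0))=0$ for every $i$, whence $\frac{dv_k}{dt}(t_0)=D_kS_k>0$, contradicting that $v_k$ decreases through $0$ at $t_0$. Once $x_i(t)\ge 0$, the consumption term $\sum_i c_{ki}x_i\phi_i(v)$ is nonnegative, so $\frac{dv_k}{dt}\le D_k(S_k-v_k)$; comparison with the linear equation $\dot w_k=D_k(S_k-w_k)$, $w_k(0)=\bar v_k$, gives \eqref{estv}, and since $\bar v_k\le S_k$ this also yields $v(t)\le S$ componentwise.

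Because each $\phi_i$ is nondecreasing in every variable and $v(t)\le S$, we get $\phi_i(v(t))\le\phi_i(S)$, so \eqref{HX1} yields the logistic inequality $\frac{dx_i}{dt}\le x_i(\bar a_i-\gamma_i x_i)$ with $\bar a_i=\phi_i(S)-\mu_i$. Comparing with the solution of $\dot y_i=y_i(\bar a_i-\gamma_i y_i)$, $y_i(0)=\bar x_i$ — which is precisely the right-hand side of \eqref{estx}, read with the convention $\bar a_i^{-1}(e^{\bar a_i t}-1)=t$ when $\bar a_i=0$ — gives \eqref{estx}. Each such $y_i$ is bounded on $[0,\infty)$ ($y_i$ is nonincreasing and $\le\bar x_i$ when $\bar a_i\le 0$, and $y_i\le\max\{\bar x_i,\bar a_i/\gamma_i\}$ when $\bar a_i>0$), so the solution is bounded on $[0,T_{\max})$, and a bounded solution of an ODE with continuous right-hand side cannot blow up in finite time; hence $T_{\max}=\infty$. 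Finally, if $\phi_k(S)\le\mu_k$ then $\bar a_k\le 0$ and the bound in \eqref{estx} tends to $0$ as $t\to\infty$ (the numerator tends to $0$ while the denominator is $\ge 1$ when $\bar a_k<0$, and the bound equals $\bar x_k/(1+\bar x_k\gamma_k t)\to0$ when $\bar a_k=0$), so $0\le x_k(t)\le y_k(t)\to0$.

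The only genuinely delicate step, I expect, is the nonnegativity of $v$: this is exactly where assumption \eqref{MM2b} is used in an essential way, and the ``first exit time'' argument must be set up with a little care — in particular the case $\bar v_k=0$, where $v_k$ starts on $\partial\R^m_+$ but is immediately pushed into the interior by the supply term $D_kS_k>0$. Everything else is routine bookkeeping, the main point being to derive all the differential inequalities on the maximal interval $[0,T_{\max})$ before invoking them to rule out finite-time blow-up, and to handle the degenerate value $\bar a_i=0$ in \eqref{estx} via the limiting convention noted above.
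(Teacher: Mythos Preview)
The paper itself does not prove Proposition~\ref{prop1}: it simply states that ``the proof of the following estimates can be found in \cite{KTVW17}.'' There is therefore no in-paper argument to compare against. Your proof is correct and is exactly the natural argument one expects for such a result: local existence by Picard--Lindel\"of, positivity of $x_i$ from the exponential representation, positivity of $v$ from the boundary condition \eqref{MM2b} via a first-hitting-time argument, then the two scalar comparison inequalities $\dot v_k\le D_k(S_k-v_k)$ and $\dot x_i\le x_i(\bar a_i-\gamma_i x_i)$ (using monotonicity of $\phi_i$ together with $v(t)\le S$), and finally global existence from the resulting a priori bounds. Your handling of the degenerate case $\bar a_i=0$ and of the last assertion $x_k(t)\to 0$ when $\bar a_k\le 0$ is also correct. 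The only point on which one could ask for slightly more precision is the invariance of $\R^m_+$: since the $\phi_i$ are a priori defined only on $\R^m_+$, the cleanest way to run the first-exit-time argument is to extend each $\phi_i$ to $\R^m$ by $\tilde\phi_i(v)=\phi_i(v_+)$, show that the extended flow leaves $\R^m_+$ forward invariant by your argument, and conclude; but you already flag this as the delicate step, so this is a matter of write-up rather than a gap.
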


In what follows, we make the following  natural \textbf{assumption}:
\begin{equation}\label{K1}
\phi_i(S)-\mu_i>0,\;\;  \text{for all $i=1,\ldots , M$}.
\end{equation}
Indeed,  if $\phi_i(S)-\mu_i \le 0$ for a certain index $i$ then
according to Proposition~\ref{prop1} $x_i(t) \to 0 $  as
$t\to\infty$, thus,  species $i$ will surely go extinct and
therefore  can be excluded from the analysis.

\subsection{Equilibrium resource values and convergence to
equilibria }\label{equ}
Let $E$ denote the set of nonnegative equilibrium points (stationary
solutions) $(x,v)$ of \eqref{HX1}-\eqref{HV1}. It is straightforward to see that $(0,S)\in E$. This point expresses the equilibrium resource
availabilities in the absence of any species. Furthermore, it was
shown in \cite{KTVW17} that under assumption \eqref{K1}, for an
arbitrary $(x,v)\in E$ such that $(x,v)\ne (0,S)$ there holds
\begin{equation}\label{positivity}
x>0 \quad \text{and}\quad 0\ll v\ll S.
\end{equation}
Among all equilibrium points in $E$, we shall distinguish the
\textit{special} ones defined as follows: an equilibrium  point
$(x^{\mathrm{eq}},v^{\mathrm{eq}})$ is called \textit{special} if  $v^{\mathrm{eq}}$ is a solution\label{xvref}
of the fixed point problem
\begin{equation}
  D_k(S_k -v^{\mathrm{eq}}_k)=F_{k}(v^{\mathrm{eq}}),\qquad 1\le
  k\le m,
\label{Starvd}
     \end{equation}
where
$$
F_{k}(v)=\sum_{i=1}^{M} \frac{c_{ki}
\phi_i(v)}{\gamma_i}(\phi_i(v) -\mu_i)_{+},
$$
and $x^{\mathrm{eq}}$ is uniquely determined by
\begin{equation} \label{sur66}
x^{\mathrm{eq}}_i=\frac1{\gamma_i} (\phi_i(v^{\mathrm{eq}})
-\mu_i)_{+}.
\end{equation}
Note that $F_k$ can be interpreted as total consuming rates $\frac{1}{\gamma_i}c_{ki}
\phi_i(v)(\phi_i(v) -\mu_i)_{+}$ over
all the species $\le i\le M$. As a corollary of the monotonicity of $\phi_i$, it can be shown that the set of special equilibrium points $(x^{\mathrm{eq}},\,v^{\mathrm{eq}})$ is
nonempty for any choice of the fundamental parameters of the model, see  \cite{KTVW17} for the proof.
If $m=1$ then one can easily prove that there always exists a unique
special point, see \cite{KVU16}. But, if $m\ge 2$ and $D_i$ or
$\gamma_i$ are small enough, there can exist several special points.
On the other hand, under some natural conditions, there exists a
unique special equilibrium point. More precisely, we have the
following result.

 \begin{theorem}[Corollary 7.2 in \cite{KTVW17}] \label{Theor1}
Assume that
\begin{equation}\label{Lip}
\rho:=\max_{1 \le i \le m} \sum_{j=1}^M  \frac{c_{ij} L_j
(2\phi_j(S)-\mu_i)}{D_i \gamma_j} \leq 1
\end{equation}
Then all the solutions $(x(t), v(t))$ of \eqref{HX1}, \eqref{HV1}
with positive initial data
$v(0) > 0, x(0) > 0$ converge, as $t \to \infty$, to   the unique
special equilibrium point defined by
\eqref{Starvd} and \eqref{sur66}.
\end{theorem}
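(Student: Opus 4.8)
The plan is to establish convergence by combining the two structural features emphasized in the introduction: a fast/slow (singular perturbation) structure coming from large turnover rates, and a monotonicity (cooperativity) property of the coupled system. First I would record the a priori bounds from Proposition~\ref{prop1}, which confine every trajectory to the compact box $\prod_i[0,\bar x_i\bar a_i/(\gamma_i)]\times[0,S]$ for large $t$; this makes all the Lipschitz constants effective and lets me work on a compact invariant region. The key analytic device is to show that the map
\begin{equation}
 \Psi(v)=\Big(S_k-\tfrac{1}{D_k}F_k(v)\Big)_{k=1}^m
\label{eq:fixedpoint}
\end{equation}
is a contraction on $[0,S]$ under hypothesis \eqref{Lip}: indeed $\partial F_k/\partial v_j$ is bounded in absolute value by $c_{kj}L_j(2\phi_j(S)-\mu_j)/\gamma_j$ on the relevant box (differentiate $\phi_j(v)(\phi_j(v)-\mu_j)_+$ and use $|\phi_j'|\le L_j$, $0\le\phi_j\le\phi_j(S)$), so the Lipschitz constant of $\Psi$ in the $\|\cdot\|$-norm is at most $\rho\le1$. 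Strictness of the inequality on the interior, or an elementary compactness argument, then upgrades this to a genuine contraction, giving uniqueness of the special equilibrium $v^{\mathrm{eq}}$, and hence of $x^{\mathrm{eq}}$ via \eqref{sur66}.

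Next I would exploit monotonicity. The system \eqref{HX1}--\eqref{HV1} is, after the change of variables that makes the resource equations "cooperative" (replace $v_k$ by $S_k-v_k$, or equivalently observe that $\partial(\dot x_i)/\partial v_k\ge0$ because $\phi_i$ is non-decreasing, while the cross terms in the $v$-equations have a consistent sign), a monotone dynamical system on the box in the sense of Hirsch. The intended mechanism is a squeezing argument: starting from the two extreme corners of the box one generates a decreasing sequence of upper solutions and an increasing sequence of lower solutions, each step being exactly one application of $\Psi$ composed with the fast relaxation of the $x_i$ to their quasi-equilibria $x_i=\gamma_i^{-1}(\phi_i(v)-\mu_i)_+$. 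Because $\Psi$ is a contraction, the upper and lower iterates converge to the same limit, namely the unique special point, and the monotonicity traps every trajectory with positive initial data between a lower and an upper iterate. Passing to the limit gives $(x(t),v(t))\to(x^{\mathrm{eq}},v^{\mathrm{eq}})$.

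The step I expect to be the main obstacle is making the fast/slow reduction rigorous and uniform: one must show that the slaving $x_i\approx\gamma_i^{-1}(\phi_i(v)-\mu_i)_+$ holds with an error controlled uniformly in the (large) $D_k$, so that the discrete contraction estimate in \eqref{eq:fixedpoint} actually dominates the residual coupling. A clean way around the full singular-perturbation machinery is to avoid asymptotics in $D_k$ altogether and instead build a Lyapunov-type functional directly from the contraction: set $\Phi(v(t))=\|v(t)-\Psi(v(t))\|$ and show, using the $v$-equation and the bound $|\dot x_i|$ small once $x_i$ is near its quasi-equilibrium, that $\Phi$ decays; together with the fact that the $\omega$-limit set is contained in $E\setminus\{(0,S)\}$ (by \eqref{K1} and \eqref{positivity}) and consists of special points only (by the contraction), this forces the $\omega$-limit set to be the single point $(x^{\mathrm{eq}},v^{\mathrm{eq}})$. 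Since this is quoted as Corollary~7.2 of \cite{KTVW17}, I would in the write-up either reproduce that argument in this form or simply cite it, but the sketch above is the route I would take to reconstruct it.
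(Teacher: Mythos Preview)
Your contraction estimate for the static map $\Psi(v)=(S_k-D_k^{-1}F_k(v))_k$ on $[0,S]$ is essentially correct and does yield uniqueness of the special equilibrium. The serious gap is the convergence part. Your claim that \eqref{HX1}--\eqref{HV1} becomes a cooperative (Hirsch--monotone) system after replacing $v_k$ by $S_k-v_k$ is false: one has $\partial\dot x_i/\partial v_k\ge 0$ but $\partial\dot v_k/\partial x_i=-c_{ki}\phi_i(v)\le 0$, and for $m\ge 2$ also $\partial\dot v_k/\partial v_j=-\sum_i c_{ki}x_i\,\partial\phi_i/\partial v_j\le 0$ for $j\ne k$. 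No orthant sign change makes all off-diagonal entries nonnegative; the $x$--$v$ coupling is of predator--prey type, not cooperative, so the two-sided squeezing argument you describe via Hirsch's theory does not go through. The Lyapunov-type fallback you sketch at the end is not a substitute either: showing that $\Phi(v(t))=\|v(t)-\Psi(v(t))\|$ decays would require precisely the uniform slaving $x_i\approx\gamma_i^{-1}(\phi_i(v)-\mu_i)_+$ that you identify as the main obstacle, and without cooperativity there is no mechanism forcing the trajectory between upper and lower iterates.

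The paper's route (summarized after the statement of the theorem) is close in spirit but technically different in exactly the place where your argument breaks. Instead of treating the full ODE as a monotone flow, the paper \emph{eliminates} the $x$-variables by solving \eqref{HX1} along a given $v(\cdot)$ and substitutes into \eqref{HV1}, obtaining an integral equation $v=\mathcal{T}v$ on a Banach space of functions $0\le v(\cdot)\le S$. The operator $\mathcal{T}$ is \emph{nonincreasing} (antitone), which is the correct manifestation of the monotonicity here: larger $v$ produces larger $x$, which in turn drives $v$ down. Iterating an antitone map from the two corners gives monotone sequences $\bar v^{(2n)}$ and $\bar v^{(2n+1)}$ converging to limits $v_{\mathrm{even}}$ and $v_{\mathrm{odd}}$ that trap the true solution; the role of condition \eqref{Lip} is then to estimate $\|v_{\mathrm{even}}-v_{\mathrm{odd}}\|$ and force this period-two gap to close. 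So the missing idea in your proposal is the reduction to an \emph{antitone} operator on function space (not on $\R^m$) together with the period-two-point analysis, in place of a cooperativity argument that is not available for this system.
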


In Section  \ref{equ}  we will see that the unique point
$(x^{\mathrm{eq}},\,v^{\mathrm{eq}})$  is the right equilibrium
point for the system (\ref{HX1})-(\ref{HV1}), which attracts all
trajectories. The proof of Theorem~\ref{Theor1} is given in our recent paper \cite{KTVW17}. Our approach is based on an iteration technique and two side estimates for period-two-points of nonincreasing maps and relies on the monotone properties of our system. More precisely, eliminating in an appropriate way the $x$-variables, we reduce the analysis to certain integral equations for  $v=(v_1,\ldots,v_m)$. It turns out that these integral equations  give rise to a nonincreasing operator in an appropriate Banach space  of functions $0\le v\le S$. By the monotonicity, the solution $v$ is estimated by the iterated sequences $\bar v^{(2n)}$ and   $\bar v^{(2n+1)}$, which are shown to converge, say to limits $v_{even}$ and $v_{odd}$, respectively. The second part of the proof establishes the estimates on the difference $v_{even}-v_{odd}$ in terms of $\rho$ in \eqref{Lip} above. This implies that if the turnover rates $D_i$ are large enough then  the limits $v_{even}$ and $v_{odd}$ coincide, which yields the required stability result.

Alternatively, the main idea of the proof can be explained as follows. If the turnover rate $D=\min_i \{D_i\}$ are large enough then our system has the standard structure typical for systems with slow/fast variables. More precisely, in that case the resources $v_i$ evolve fast in time whereas the species abundances are slow variables. Using this property we make substitution $v_i=S_i- \tilde v_i$, where  $\tilde v_i$ are new unknowns. This yields an a priori estimate $|\tilde v_i|=O(D^{-1})$. Therefore, given $x_i(t)$ we can solve the differential equations for  $v_i$ by iterations. The a priori estimates also imply that the first approximation for $x_i$ can be determined by certain simplified equations which do not involve $v$. Proceeding further, we get  an approximation for $v_i$ of order $O(D^{-2})$, etc. This yields the convergence of the procedure by standard methods of theory of invariant  manifolds for slow/fast systems.

\section{Model with extinction threshold} \label{ext}

\subsection{Formulation of the model}
\label{Xex}
System (\ref{HX1})-(\ref{HV1})  does not take into account  species
extinctions due to extinction thresholds. To describe this effect,
we follow \cite{KVU16} with certain simplifications. In order to describe species extinction  we introduce small positive
parameters $X_{\mathrm{ext}}^{(i)}$ being an extinction threshold
and we will consider only solutions such that
$x_i(0) > X_{\mathrm{ext}}^{(i)}$. Estimates of $X_{\mathrm{ext}}^{(i)}$ can be obtained by stochastic
models, for example, \cite{Nasell}, however, these models are rather complicated.  The approach suggested below simplifies the problem and still allows us to obtain nontrivial effects.

Let $ S_e(t)$ denote the set of indices of those species which exist
at the time $t$ and let $N_e(t)$ denote the cardinality of $S_e(t)$.
Then $S_e(0)=\{1,2,\ldots , M\}$ and $N_e(0)=M$.

We say that $x_k$ disappears at the moment $t_*$ if
$$
x_k(t_*)=X_{\mathrm{ext}}^{(k)}\quad \text{  and }
\quad x_k(t) > X_{\mathrm{ext}}^{(k)}\quad \text{ for  $t < t_*$.}
$$
In this case we remove the index $k$ from the set $ S_e(t)$, $t\geq
t_*$. The parameters $X_{\mathrm{ext}}^{(k)}$ can be interpreted as
a thresholds for species abundances.


With modifications described above,  equations (\ref{HX1}) and (\ref{HV1})
define the dynamics as follows. Within each time interval $(t_*,
T_*)$ between the subsequent species extinctions,  the dynamical
evolution of $x_i(t)$ is determined by the system
(\ref{HX1}), (\ref{HV1}). We obviously have
$$
S_e(t_2)\subset S_e(t_1), \quad  0\le t_1 \le t_2,
$$
therefore $N_e(t)$ is a piecewise constant decreasing function,
hence
there exist  limits
\begin{equation} \label{limit}
S_e(t) \to S_e, \quad N(t) \to N_{e},  \quad \text{as }t \to
+\infty,
\end{equation}
where $N_e$ is the cardinality of the set $S_e$.\label{Nref}  Note that the set
$S_e$ and  its cardinality  $N_{e}$ depend on the initial data
as it is shown in \cite{KVU16}. By \eqref{limit}  there exists a
time moment $T_e$  such that \textit{all extinctions have occurred}
and thus we can use Theorem~\ref{Theor1} for the remained species.
For $t > T_e$ system \eqref{HX1}, \eqref{HV1} can be rewritten as
follows:
\begin{align}
     \frac{dx_i}{dt}&=x_i ( \phi_i(v)- \mu_i  -  \gamma_{i} \; x_i),
     \quad  i \in S_{e},
    \label{HX1E}\\
     \frac{dv_k}{dt}&=D_k(S_k -v_k)   -  \sum_{i \in S_{e}} c_{ ki}
     \; x_i \; \phi_i(v), \quad k=1,\dots, m,
    \label{HV1E}
     \end{align}
where
\begin{equation} \label{star}
x_i(t)  > X_{\mathrm{ext}}^{(i)}, \quad \forall t > T_e.
\end{equation}
This system describes large time dynamics of ecosystem when all
extinctions are finished. Note that the set of remaining species
$S_{e}$ may be different for different initial data of original
system (\ref{HX1}), (\ref{HV1}).

First let us consider the particular case of
\eqref{HX1E}-\eqref{HV1E} is when all species disappear, i.e.
$S_e=\emptyset$. Then \eqref{HX1E}-\eqref{HV1E} amounts to
$$
\frac{dv_k}{dt}=D_k(S_k -v_k)\quad k=1,\dots, m,
$$
hence the solution converges to the supply equilibrium state
$(x,v)=(0,S)$. The dynamic in this case is trivial, therefore, we
assume in what follows that
$$
S_e\ne \emptyset.
$$

An analysis of the dependence of $S_e$ and $N_e$ on the initial data
seems to be rather complicated, therefore, we consider instead the
maximal possible number $N_{\mathrm{max}}$ of species which may
survive. More precisely, we define
 \begin{equation} \label{Nmax}
N_{\mathrm{max}} =\max  N_e
\end{equation}
where the maximum is taken over all possible sets $S_e\subset
\{1,\ldots,M\}$ for which the problem (\ref{HX1E})-(\ref{star}) has
a solution. We present some estimates of $N_{\mathrm{max}}$ in
section~\ref{bioest} below.

\subsection{Dynamics of the model with extinction thresholds}
\label{AsymExt}

To obtain equilibria for system (\ref{HX1E}), (\ref{HV1E}),  we
should take into account that $x_i(t) > X_{\mathrm{ext}}^{(i)}$ for
all $t > T_f$. Therefore, equations for equilbria take
the form
\begin{equation} \label{sur66e}
X_i(v^{\mathrm{eq}})=\gamma_i^{-1}
\left(\phi_i(v^{\mathrm{eq}}) -\mu_i\right)_{+,\epsilon_i}, \quad
\epsilon_i=\gamma_i X_{\mathrm{ext}}^{(i)}.
\end{equation}
and
\begin{equation}
  D(S -v^{\mathrm{eq}})=F_{\mathrm{ext}}(v^{\mathrm{eq}}, b, K, p),
\label{StarvdM}
     \end{equation}
where
\begin{equation}  \label{FNeq}
F_{\mathrm{ext}}(v, b, K, p)=\sum_{i\in S_{e}} R_i (v, b, K, p).
\end{equation}
Here and in what follows, we denote by $z_{+, \delta}$ the cut-off
function
$$
z_{+, \delta}=
\left\{
\begin{array}{rl}
0& \text{if $z < \delta$};\\
z& \text{if $z \ge \delta$};\\
\end{array}
\right.
$$

Since the dynamics after the moment $T_f$  is completely determined
by differential equations  (\ref{HX1E}), (\ref{HV1E}), we obtain  by
Theorem~\ref{Theor1}:

 \begin{corollary} \label{colII}
Suppose that $S_e$ is not empty and condition \eqref{Lip} holds. Let
$(x, v)$ be solution  of \eqref{HX1E}, \eqref{HV1E} and
\eqref{star}.
Then this solution converges, as $t \to \infty$, to   an
equilibrium point satisfied   \eqref{sur66e}, \eqref{StarvdM}, and
\eqref{FNeq}.
\end{corollary}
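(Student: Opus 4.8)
The plan is to deduce the statement directly from Theorem~\ref{Theor1}, applied not to the full system but to the subsystem carried by the surviving species $S_e$, and then to check in a few lines that the limiting equilibrium produced by that theorem satisfies the relations \eqref{sur66e}, \eqref{StarvdM}, \eqref{FNeq}.

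First I would observe that for $t>T_e$ the equations \eqref{HX1E}, \eqref{HV1E} are \emph{verbatim} the system \eqref{HX1}, \eqref{HV1} in which the species index set $\{1,\dots,M\}$ is replaced by $S_e$, and that all the standing assumptions of Section~\ref{model} are inherited by this subsystem: the conditions \eqref{MM2a}, \eqref{MM2b} and the coordinatewise monotonicity of each $\phi_i$ are unchanged, and \eqref{K1} holds for every $i\in S_e$ because it holds for every $i\in\{1,\dots,M\}$. Next I would verify that the smallness condition \eqref{Lip} survives the restriction: its summands are nonnegative (a consequence of \eqref{K1}), so passing from $\{1,\dots,M\}$ to $S_e$ can only decrease $\rho$, and \eqref{Lip} continues to hold for the $S_e$-subsystem. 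Finally, fixing any $t_0>T_e$ and taking it as the time origin, the data $\bigl(x(t_0),v(t_0)\bigr)$ are admissible for Theorem~\ref{Theor1}: by \eqref{star} one has $x_i(t_0)>X_{\mathrm{ext}}^{(i)}>0$ for $i\in S_e$, and $v_k(t_0)>0$ for every $k$ since $v_k$ cannot vanish at a positive time --- at a point with $v_k=0$ we have $v\in\partial\mathbb{R}^m_{+}$, hence $\phi_i(v)=0$ by \eqref{MM2b} and $dv_k/dt=D_kS_k>0$, which is incompatible with $v_k\ge 0$.

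Granting these checks, Theorem~\ref{Theor1} yields that $(x(t),v(t))\to(x^{\mathrm{eq}},v^{\mathrm{eq}})$ as $t\to\infty$, where $(x^{\mathrm{eq}},v^{\mathrm{eq}})$ is the unique special equilibrium of the $S_e$-subsystem, i.e.\ (using \eqref{sur66}, \eqref{Starvd} with the sums restricted to $S_e$)
\begin{equation}\label{cor-eq}
\begin{aligned}
x^{\mathrm{eq}}_i&=\frac{1}{\gamma_i}\bigl(\phi_i(v^{\mathrm{eq}})-\mu_i\bigr)_{+},\qquad i\in S_e,\\
D_k\bigl(S_k-v^{\mathrm{eq}}_k\bigr)&=\sum_{i\in S_e}\frac{c_{ki}\,\phi_i(v^{\mathrm{eq}})}{\gamma_i}\,\bigl(\phi_i(v^{\mathrm{eq}})-\mu_i\bigr)_{+},\qquad 1\le k\le m.
\end{aligned}
\end{equation}
It then remains only to replace the positive part $(\,\cdot\,)_{+}$ in \eqref{cor-eq} by the cut-off $(\,\cdot\,)_{+,\epsilon_i}$, $\epsilon_i=\gamma_iX_{\mathrm{ext}}^{(i)}$, without changing any value. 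Passing to the limit in \eqref{star} gives $x^{\mathrm{eq}}_i\ge X_{\mathrm{ext}}^{(i)}$, so $\bigl(\phi_i(v^{\mathrm{eq}})-\mu_i\bigr)_{+}=\gamma_i x^{\mathrm{eq}}_i\ge\epsilon_i>0$ for each $i\in S_e$; a strictly positive value of $(\,\cdot\,)_{+}$ forces its argument to be positive and equal to it, hence $\phi_i(v^{\mathrm{eq}})-\mu_i\ge\epsilon_i$, and therefore $\bigl(\phi_i(v^{\mathrm{eq}})-\mu_i\bigr)_{+,\epsilon_i}=\bigl(\phi_i(v^{\mathrm{eq}})-\mu_i\bigr)_{+}$. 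Substituting this identity into \eqref{cor-eq} turns it into exactly \eqref{sur66e}, \eqref{StarvdM}, \eqref{FNeq}.

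I do not expect a genuine obstacle here --- as the name \emph{corollary} suggests, the content is essentially that of Theorem~\ref{Theor1}, and the rest is bookkeeping. The only two points that deserve a word of care are (i) that the restriction of the index set does not spoil condition \eqref{Lip}, which uses the nonnegativity of its summands, and (ii) the strict positivity of the limiting abundances $x^{\mathrm{eq}}_i$, $i\in S_e$ --- this is precisely what makes the cut-off $(\,\cdot\,)_{+,\epsilon_i}$ coincide with $(\,\cdot\,)_{+}$ at the equilibrium, and it is the single place in the argument where the defining property \eqref{star} of the post-extinction regime is actually used.
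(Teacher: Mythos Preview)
Your proposal is correct and follows the same approach as the paper, which treats the corollary as an immediate consequence of Theorem~\ref{Theor1} applied to the $S_e$-subsystem (the paper's entire justification is the one sentence preceding the corollary). Your write-up is in fact considerably more careful than the paper's: you explicitly verify that \eqref{Lip} is inherited by the subsystem via nonnegativity of the summands, that the time-shifted initial data are admissible, and---the one genuinely nontrivial bookkeeping point---that \eqref{star} forces $(\phi_i(v^{\mathrm{eq}})-\mu_i)_{+}=(\phi_i(v^{\mathrm{eq}})-\mu_i)_{+,\epsilon_i}$ at the limit, which the paper leaves entirely to the reader.
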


\section{Estimates of biodiversity} \label{bioest}

In the first three subsections of this section we derive
general estimates of biodiversity which make no assumptions on large time behaviour of the system. In section~\ref{2ex}, we obtain the biodiversity estimates which are asymptotically sharp. In the  remaining part, we also establish estimates under assumption that there are not oscillating or chaotic regimes, and each trajectory converge to an equilibria.

\subsection{The general case}

In the model with extinctions we are able to estimate the maximal
biodiversity, i.e.  the maximal possible value $N_e$ expressed by
means of the ecosystem parameters. For this purpose we apply the
averaging procedure similar to that considered in \cite{KVU16} for
$m=1$.

Let $f(t)$ be  a continuous function which is uniformly bounded on
$[0, +\infty)$. Then its $t$-average value $\langle f \rangle$ is
defined by
$$
\langle f \rangle_t = \frac{1}{t} \int_0^{t} f(s) ds.
$$
 Let $x=(x_1,\ldots,x_{N_e})$, $v=(v_1,\ldots,m)$ be a solution of
 (\ref{HX1E})-(\ref{star}). Applying  the $t$-average to
 (\ref{HX1E}) and (\ref{HV1E}) and using the boundedness of $x$ and
 $v$, we obtain
 \begin{equation}\label{LUD1}
 \langle x_i\phi_i(v) \rangle_t-\mu_i\langle x_i \rangle_t-\gamma_i
 \langle x_i^2 \rangle_t=\xi_i(t)
 \end{equation}
 and
\begin{equation}  \label{averv}
   D_k(S_k - \langle v_k \rangle_t ) = \sum_{i \in S_{e}} c_{ki}
   \langle x_i  \phi_i(v) \rangle_t + \tilde \xi_k(t),
\end{equation}
where
$\xi_i(t)=\frac{1}{t}(x_i(t)-x_i(0))$, $\tilde
\xi_k(t)=\frac{1}{t}(v_k(t)-v_k(0))$.
Since $x_i$ and $v_k$ are nonnegative and bounded from above,  we obtain $\lim_{t \to +\infty}\xi_i(t)=\lim_{t \to +\infty}\tilde
\xi_k(t)=0$.

Next, since $x_i(t) $ is bounded from above and $x_i(t) > X_{\mathrm{ext}}^{(i)}>0$, we have
$$
\eta_i(t):=\left\langle \frac{x_i'}{x_i} \right\rangle_t = \frac{1}{t} \int_0^{t} \frac{dx_i(s)}{x_i(s)}=\frac{1}{t}\ln \frac{x_i(t)}{x_i(0)}\to 0\quad \text{as}\quad t\to+\infty,
$$
therefore dividing the left-hand and right-hand sides of \eqref{HX1E} by
$x_i(t)$ followed by the $t$-average we get
\begin{equation}  \label{averx}
   \langle \phi_i(v) \rangle_t  = \mu_i + \gamma_i \langle x_i
   \rangle_t + \eta_i(t),
\end{equation}
where $\eta_i(t) \to 0$ as $t \to +\infty$. Furthermore, by the Cauchy
inequality we have
$
\langle x_i \rangle_t^2\leq \langle x_i^2 \rangle_t,
$
hence we derive from (\ref{LUD1}) that
\begin{equation}  \label{phiv}
\langle x_i  \phi_i \rangle_t \ge (\mu_i + \gamma_i
X_{\mathrm{ext}}^{(i)}) \langle x_i   \rangle_t + \xi_i(t).
\end{equation}

Relations \eqref{averx} and \eqref{phiv}  allow us to obtain a general estimate of consumed resources $v_k$ expressed only in the fundamental parameters of the main system (not involving $\gamma_i$ and $ X_{\mathrm{ext}}^{(i)}$).

\begin{lemma} \label{lemma1}
If the von Liebig law \eqref{Liebm}
holds, we have for sufficiently large $t$ and  all  $\le k\le m$
\begin{equation}  \label{vvv1}
\langle   v_k \rangle_t  > V_k(S_e) := \max_{i \in S_e} \frac{\mu_i
K_{ik} }{r_i - \mu_i}.
 \end{equation}
\end{lemma}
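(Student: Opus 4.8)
The plan is to derive the bound \eqref{vvv1} directly from the averaged identities \eqref{averx} and \eqref{phiv}, exploiting only the explicit form \eqref{Liebm} of the von Liebig response. First I would fix an index $i \in S_e$ and bound $\phi_i(v)$ from above using \eqref{Liebm}: since the minimum of the fractions $v_j/(K_{ij}+v_j)$ over $j$ is at most the single term $v_k/(K_{ik}+v_k)$, we have $\phi_i(v) \le r_i v_k/(K_{ik}+v_k)$ pointwise in time. Because $z\mapsto r_i z/(K_{ik}+z)$ is concave and increasing on $[0,\infty)$, applying the $t$-average and Jensen's inequality gives $\langle \phi_i(v)\rangle_t \le r_i \langle v_k\rangle_t/(K_{ik}+\langle v_k\rangle_t)$.

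Next I would combine this with the lower bound on $\langle \phi_i(v)\rangle_t$ coming from \eqref{averx} and \eqref{phiv}. From \eqref{averx} we have $\langle \phi_i(v)\rangle_t = \mu_i + \gamma_i\langle x_i\rangle_t + \eta_i(t)$, and since $x_i(t) > X_{\mathrm{ext}}^{(i)} \ge 0$ so $\langle x_i\rangle_t \ge 0$ (in fact $>X_{\mathrm{ext}}^{(i)}$ is not even needed here; nonnegativity suffices, or one could keep the $\gamma_i X_{\mathrm{ext}}^{(i)}$ term for a slightly stronger statement), we get $\langle \phi_i(v)\rangle_t \ge \mu_i + \eta_i(t)$. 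Chaining the two inequalities yields
\begin{equation*}
\mu_i + \eta_i(t) \le \frac{r_i \langle v_k\rangle_t}{K_{ik}+\langle v_k\rangle_t}.
\end{equation*}
Solving this for $\langle v_k\rangle_t$: the right-hand side is an increasing function of $\langle v_k\rangle_t$, so the inequality is equivalent to $\langle v_k\rangle_t \ge (\mu_i+\eta_i(t)) K_{ik}/(r_i - \mu_i - \eta_i(t))$, provided $r_i - \mu_i - \eta_i(t) > 0$, which holds for large $t$ since $r_i - \mu_i \ge \phi_i(S) - \mu_i > 0$ by assumption \eqref{K1} (note $\phi_i(S) = r_i\min_j S_j/(K_{ij}+S_j) < r_i$) and $\eta_i(t)\to 0$. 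Letting $t\to\infty$ in the term involving $\eta_i$, for any fixed $\varepsilon>0$ and $t$ large enough we obtain $\langle v_k\rangle_t > \mu_i K_{ik}/(r_i-\mu_i) - \varepsilon$; taking the maximum over $i\in S_e$ and absorbing $\varepsilon$ (since $S_e$ is finite) gives the strict inequality \eqref{vvv1}.

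The main obstacle, and the only point requiring real care, is the direction of the Jensen step: one must verify that the relevant function is concave so that the averaging inequality goes the correct way, and then that inverting the resulting inequality in $\langle v_k\rangle_t$ preserves its direction — both hinge on monotonicity and concavity of $z\mapsto r_i z/(K_{ik}+z)$, which are elementary. A secondary subtlety is the handling of the error terms $\eta_i(t)$: since $S_e$ is finite one may choose a single $t_0$ beyond which all the $|\eta_i(t)|$ are simultaneously small enough to both keep $r_i-\mu_i-\eta_i(t)$ bounded away from zero and to make the bound strict, so no uniformity problem arises. Everything else is direct substitution.
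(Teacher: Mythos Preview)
Your argument is essentially the paper's: both combine the averaged identity \eqref{averx} with Jensen's inequality for the concave Holling response, then invert. The only cosmetic difference is the order of operations: the paper applies Jensen directly to the multivariate $\phi_i$ (concave as a minimum of concave functions) to get $\phi_i(\langle v\rangle_t)\ge\langle\phi_i(v)\rangle_t>\mu_i$, and only afterwards drops the $\min$ to reach the $k$-th component; you bound $\phi_i$ by the $k$-th fraction first and then apply scalar Jensen. These are interchangeable.

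There is one small but genuine slip. Your final ``absorbing $\varepsilon$'' step does not deliver the \emph{strict} inequality in \eqref{vvv1}: from $\langle v_k\rangle_t > V_k(S_e)-\varepsilon$ for every $\varepsilon>0$ you can conclude only $\langle v_k\rangle_t \ge V_k(S_e)$. Strictness requires a fixed positive margin, and the paper obtains it precisely by the device you flagged as optional: it uses $\langle x_i\rangle_t \ge X_{\mathrm{ext}}^{(i)}>0$ and chooses $T$ so that $|\eta_i(t)|<\gamma_i X_{\mathrm{ext}}^{(i)}$ for $t>T$, forcing $\langle\phi_i(v)\rangle_t>\mu_i$ strictly. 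You should commit to that route rather than the $\varepsilon$-argument.
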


\begin{proof} Let $T>0$ be chosen such that $|\eta_i(t)|<\gamma_i
X_{\mathrm{ext}}^{(i)}$ for all $t>T$. Next, note that the function $\phi_i(v)$ defined by \eqref{Liebm} is concave as  the minimum of concave functions.  Therefore, combining  Jensen's inequality  with \eqref{averx} we obtain
$$
\phi_i(\langle v \rangle_t )\ge \langle \phi_i(v) \rangle_t > \mu_i,\quad \forall t>T
$$
which implies the desired estimate \eqref{vvv1}.
\end{proof}

\begin{remark}
Note that in the case  $m=1$  one has
\begin{equation}  \label{vvv2}
\langle   v \rangle_t  >  \bar V= \min_{i \in \{1,\ldots , M\}}
\lambda_i,
 \end{equation}
where
\begin{equation}  \label{lambd}
 \lambda_i =\frac{ \mu_i K_i}{r_i - \mu_i}.
\end{equation}
The parameters  $\lambda_i$ represent break-even concentrations and
appear in analysis of resource competition ecosystems as important
species characteristics (see  \cite{Hsu05} and the references
therein).
\end{remark}

Now, combining  (\ref{vvv1}) with (\ref{phiv}) it follows  from
(\ref{averv}) that for large $t$
 \begin{equation}\label{LUD2}
 D_k(S_k - V_k)  \geq \sum_{i \in S_{e}} c_{ki} (\mu_i + \gamma_i
 X_{\mathrm{ext}}^{(i)}) \langle x_i   \rangle_t+ \xi_k(t),
\end{equation}
where $\xi_k(t) \to 0$ as $t \to +\infty$. Let us introduce the
following averages:
$$
\theta_k(S_e):=\frac{1}{N_e}\limsup_{t\to\infty}\sum_{i \in S_{e}}
c_{ki} (\mu_i + \gamma_i X_{\mathrm{ext}}^{(i)}) \langle x_i
\rangle_t,
$$
and also define
$$
\bar \theta_k=\min_{S_e \ne \emptyset}  \theta_k(S_e), \quad \bar
V_k=\min_{S_e\ne \emptyset}  V_k(S_e),
$$
where we take the minima over all possible sets $S_e \ne \emptyset$.
Then (\ref{LUD2}) implies

\begin{proposition} \label{prop2} {The number of survived species
satisfies
\begin{equation*}  \label{averv2}
      {N_{e}} \le \min_{k}\frac{D_k (S_k-  \bar V_k)} { \bar
      \theta_{k}}.
\end{equation*}
}
\end{proposition}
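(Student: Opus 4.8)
The plan is to push the long-time average inequality \eqref{LUD2} to the limit. Recall that \eqref{LUD2}, obtained by combining Lemma~\ref{lemma1} with \eqref{phiv} and the averaged resource equation \eqref{averv}, reads
\[
D_k(S_k - V_k(S_e)) \ge \sum_{i\in S_e} c_{ki}\,(\mu_i + \gamma_i X_{\mathrm{ext}}^{(i)})\,\langle x_i\rangle_t + \xi_k(t)
\]
for all sufficiently large $t$, with $\xi_k(t)\to 0$ as $t\to\infty$. Since the left-hand side is independent of $t$, I would take $\limsup_{t\to\infty}$ on both sides; because $\xi_k(t)\to 0$, the $\limsup$ of the right-hand side equals $\limsup_{t\to\infty}\sum_{i\in S_e} c_{ki}(\mu_i + \gamma_i X_{\mathrm{ext}}^{(i)})\langle x_i\rangle_t$, which is exactly $N_e\,\theta_k(S_e)$ by the definition of $\theta_k$. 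This yields
\[
D_k(S_k - V_k(S_e)) \ge N_e\,\theta_k(S_e), \qquad 1\le k\le m.
\]

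Before dividing I would record two positivity facts. First, $\theta_k(S_e)>0$: since $x_i(t) > X_{\mathrm{ext}}^{(i)} > 0$ for $t>T_e$, one has $\liminf_{t\to\infty}\langle x_i\rangle_t \ge X_{\mathrm{ext}}^{(i)} > 0$, so (using $c_{ki},\mu_i>0$) each summand and hence the $\limsup$ is strictly positive. Second, $S_k - V_k(S_e)>0$: by Proposition~\ref{prop1} we have $\langle v_k\rangle_t \le S_k$, while Lemma~\ref{lemma1} gives $\langle v_k\rangle_t > V_k(S_e)$ for large $t$, so $V_k(S_e) < S_k$. Consequently both sides of the displayed inequality are positive and we may divide to get $N_e \le D_k(S_k - V_k(S_e))/\theta_k(S_e)$ for every $k$.

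Finally I would replace the quantities depending on the a priori unknown survivor set $S_e$ by their worst-case versions. By definition $\bar V_k = \min_{S_e\ne\emptyset} V_k(S_e)$ and $\bar\theta_k = \min_{S_e\ne\emptyset}\theta_k(S_e)$, so for the actual survivor set one has $V_k(S_e)\ge \bar V_k$ and $\theta_k(S_e)\ge \bar\theta_k$. Since $0 < S_k - V_k(S_e) \le S_k - \bar V_k$ and $0 < \bar\theta_k \le \theta_k(S_e)$, it follows that
\[
\frac{D_k(S_k - V_k(S_e))}{\theta_k(S_e)} \le \frac{D_k(S_k - \bar V_k)}{\bar\theta_k},
\]
hence $N_e \le D_k(S_k - \bar V_k)/\bar\theta_k$ for each $k$, and taking the minimum over $k$ gives the assertion.

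I do not anticipate a serious obstacle here: all the analytic content (the averaging, the Cauchy inequality giving \eqref{phiv}, and the Jensen argument giving \eqref{vvv1}) is already in place by the time \eqref{LUD2} is established. The only points that need care are the legitimacy of passing to $\limsup$ — immediate, since \eqref{LUD2} holds for all large $t$ and the remainders vanish — and the strict positivity of $\theta_k(S_e)$ and of $S_k - V_k(S_e)$, which make the division and the subsequent monotonicity step valid; both follow from the extinction-threshold bound $x_i(t) > X_{\mathrm{ext}}^{(i)}$ and from $v_k(t)\le S_k$.
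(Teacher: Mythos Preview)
Your argument is correct and is exactly the approach the paper has in mind: the paper simply writes ``Then (\ref{LUD2}) implies'' and states the proposition, and you have carefully filled in that implication (passing to the $\limsup$, checking positivity so that the division is legitimate, and then replacing $V_k(S_e)$, $\theta_k(S_e)$ by their minima $\bar V_k$, $\bar\theta_k$). There is no meaningful difference in strategy.
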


Note that the latter estimate involves only the main observable
fundamental ecosystem parameters. Furthermore, the obtained estimate
is universal in the sense that it is valid without any assumptions
on $D_k$, the number of resources  and the ecosystem dynamics.

\subsection{Modifications of the main estimate}

Estimate \eqref{averv} can be simplified for small $\gamma_i
X_{\mathrm{ext}}^{(i)} \ll \mu_i$. In that case, one has
\begin{equation*}  \label{averv6}
      {N_{e}} \le  \min _{k}   \frac{D_k (S_k- \bar V_k)}{
      \hat{\theta}_k },
\end{equation*}
where
$$
\hat{\theta}_k=\min_{S_e \ne \emptyset}
\frac{1}{N_e}\limsup_{t\to\infty}\sum_{i \in S_{e}} c_{ki} \mu_i
\langle x_i   \rangle_t.
$$

Taking into account that $x_i(t) > X_{\mathrm{ext}}^{(i)}$, we also
have a  rough estimate
\begin{equation}  \label{averv61}
      {N_{e}} \le \min_{k}   \frac{D_k (S_k- \bar V_k)}{  Z_k},
\end{equation}
where
$$
Z_k= \min_{S_e \ne \emptyset} {N_e}^{-1}\sum_{i \in S_{e}}  c_{ik}
\mu_i  X_{\mathrm{ext}}^{(i)}.
$$

\subsection{Estimates of biodiversity from below}
Now we want to estimate the biodiversity from below.  We consider
the case of a single resource $m=1$. To this end, let us introduce
the set
$$
{\mathcal B}_M=\Big \{i \in \{1,\ldots , M\}:    \frac{r_i \bar V}{K_{i}
+S}  > \mu_i \ \text{for all}
\ i \Big \},
$$
where $\bar V$ is defined by \eqref{vvv2}. The cardinality of a set
$A$ will be denoted by $|A|$.

\begin{proposition} \label{prop3}
\

\begin{itemize}
\item[{\bf a})]
Consider the dynamic without extinctions defined by \eqref{HX1}
and \eqref{HV1}, where $\phi_i$ are defined by von Liebig's law
\eqref{Liebm}. Then the number of species such that
$\liminf_{t \to +\infty}  x_i(t) >0$ is not less than
$|{\mathcal B}_M  |$.

\item[{\bf b})] In the model with extinctions, there holds $N_e=0$
    for some initial data (even if $|{\mathcal B}_M  |>0$).
\end{itemize}
\end{proposition}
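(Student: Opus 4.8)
The plan is to handle the two parts separately, since they reflect opposite phenomena. For part \textbf{a}), I would work with the single-resource system \eqref{HX1}, \eqref{HV1} (no extinction threshold) and show that every species in $\mathcal{B}_M$ is uniformly persistent. The key observation is that by Proposition~\ref{prop1} the resource satisfies $v(t)\le S$ for all $t$ (using $\bar v\le S$), and by Lemma~\ref{lemma1} applied with all species present, the time-averages satisfy $\langle v\rangle_t > \bar V$ for large $t$. For a fixed index $i\in\mathcal{B}_M$, the definition of $\mathcal{B}_M$ gives $\frac{r_i\bar V}{K_i+S}>\mu_i$; since $v\mapsto \frac{r_i v}{K_i+v}$ is increasing, one would like to conclude that $\phi_i(v(t))-\mu_i$ stays bounded away from $0$ on a set of times of full density, and then feed this into the logistic-type equation \eqref{HX1E} for $x_i$. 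Concretely, from $\langle\phi_i(v)\rangle_t>\mu_i$ plus the logistic bound $x_i\le C_i$ (Proposition~\ref{prop1}), the averaged equation \eqref{averx} — which here reads $\langle\phi_i(v)\rangle_t=\mu_i+\gamma_i\langle x_i\rangle_t+\eta_i(t)$ with $\eta_i(t)\to 0$ — would force $\liminf_{t\to\infty}\langle x_i\rangle_t \ge \gamma_i^{-1}\bigl(\liminf\langle\phi_i(v)\rangle_t-\mu_i\bigr)>0$. A positive liminf of the time-average together with the differential inequality $x_i'\ge x_i(\phi_i(0^+)-\mu_i-\gamma_i C_i)\ge -c_i x_i$ (so $x_i$ cannot decay faster than exponentially and hence cannot have long excursions near $0$ without also having a small average) yields $\liminf_{t\to\infty}x_i(t)>0$. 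Doing this for each $i\in\mathcal{B}_M$ gives at least $|\mathcal{B}_M|$ persistent species.

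For part \textbf{b}) the statement is an existence claim: I need to exhibit initial data for which total extinction occurs. Here I would exploit the extinction threshold mechanism directly. Pick initial species abundances $\bar x_i$ all slightly above the thresholds $X_{\mathrm{ext}}^{(i)}$ but very small, and pick the initial resource $\bar v$ small (say $\bar v$ close to $0$, which is admissible by \eqref{K16a}). While all $x_i$ are tiny, the resource equation \eqref{HV1} is dominated by $v' \approx D(S-v)$ only on a long time scale, but the consumption term $\sum c_{ki}x_i\phi_i(v)$ is negligible, so $v(t)$ relaxes toward $S$; however, for the species equations, as long as $v(t)$ is still below the break-even level the growth rate $\phi_i(v(t))-\mu_i$ is negative, and with $\gamma_i>0$ and tiny $x_i$ the self-limitation term is harmless — so each $x_i$ strictly decreases. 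The point is a timescale race: if the $x_i$ start close enough to their thresholds, they hit $X_{\mathrm{ext}}^{(i)}$ and are removed \emph{before} $v(t)$ climbs past the break-even concentrations $\lambda_i$. Once a species is removed it is gone forever (the set $S_e(t)$ only shrinks), so one by one all species go extinct and $N_e=0$, even though $\mathcal{B}_M$ computed from the original supply $S$ may be large — the resolution of the apparent contradiction with part \textbf{a}) is that part \textbf{a})'s persistence used the \emph{full} averaged resource dynamics with all species present and no removal, whereas in \textbf{b}) removal is irreversible.

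The main obstacle is making part \textbf{a}) rigorous: passing from a lower bound on the time-average $\langle x_i\rangle_t$ to a genuine lower bound on $\liminf x_i(t)$ requires ruling out that $x_i$ spends rare long intervals very close to $0$ while compensating with large values elsewhere. The clean way is the standard uniform-persistence argument: use that $x_i$ satisfies $|x_i'/x_i|\le L$ for a fixed constant $L$ (from boundedness of $\phi_i$, $\mu_i$, and $\gamma_i x_i$), so $x_i(t)$ changes by at most a bounded factor over any unit time interval; combined with $\limsup\langle x_i\rangle_t \ge$ (positive constant) this prevents $x_i$ from having a subsequence tending to $0$. For part \textbf{b}) the only delicate point is quantifying "close enough": one would estimate the time $\tau$ for $v(t)$ to reach $\min_i\lambda_i$ starting from $\bar v\approx 0$ — this is of order $D^{-1}\log(\cdot)$ and \emph{independent} of the $\bar x_i$ — and the time for $x_i$ to drop from $\bar x_i$ to $X_{\mathrm{ext}}^{(i)}$ under the negative growth rate, which shrinks like $\log(\bar x_i/X_{\mathrm{ext}}^{(i)})$ divided by a positive rate; choosing $\bar x_i - X_{\mathrm{ext}}^{(i)}$ small makes the latter arbitrarily short, winning the race. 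I expect part \textbf{b}) to be short once part \textbf{a})'s persistence machinery is in place.
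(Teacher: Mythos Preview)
Your approach to both parts matches the paper's. For part \textbf{b}) the two arguments are essentially identical: take $\bar v$ small so that $\phi_i(\bar v)-\mu_i=-2\kappa_i<0$ for every $i$, use a crude linear bound on $v(t)$ to keep $\phi_i(v(t))-\mu_i<-\kappa_i$ on some fixed interval $(0,\tau)$, deduce $x_i(t)<x_i(0)e^{-\kappa_i t}$ there, and then choose each $\bar x_i$ close enough to $X_{\mathrm{ext}}^{(i)}$ that the threshold is crossed before time $\tau$.

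For part \textbf{a}) the core is also the same: the paper uses the averaged identity \eqref{averx} (rewritten as \eqref{aa5}), the pointwise inequality $\dfrac{r_i v}{K_i+v}\ge \dfrac{r_i}{K_i+S}\,v$ (valid since $v\le S$), and the lower bound \eqref{vvv2} on $\langle v\rangle_t$ to conclude that $\gamma_i\langle x_i\rangle_t$ stays bounded below by a positive constant for every $i\in\mathcal{B}_M$. The paper stops right there; it never upgrades the positivity of $\liminf_{t\to\infty}\langle x_i\rangle_t$ to positivity of $\liminf_{t\to\infty}x_i(t)$. You correctly identify this as the real obstacle and try to close it, but the proposed fix does not work. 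The implication ``$|x_i'/x_i|\le L$ and $\liminf_{t\to\infty}\langle x_i\rangle_t>0$ together force $\liminf_{t\to\infty}x_i(t)>0$'' is false in general. For a counterexample, on successive blocks $[n^2,(n{+}1)^2]$ let $x$ stay at level $1$ for time $n$, then take an exponential dip (with logarithmic rate exactly $L$) down to $e^{-Ln/2}$ and back; each dip fits in the remaining time $n{+}1$, the time-average over $[0,(n{+}1)^2]$ stays above $\tfrac12-o(1)$, yet $\liminf x=0$. So ``bounded factor per unit time'' is not enough to rule out rare deep excursions; any genuine upgrade here would have to exploit the coupled structure of \eqref{HX1}--\eqref{HV1} beyond what either your sketch or the paper's proof invokes.
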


So, this claim shows in particular that  models with and without
extinctions have completely different behaviour. Remarkably, the
obtained biodiversity estimate does not involve $\gamma_i$, while
the proof makes use the fact that $\gamma_i >0$.

\begin{proof} {  Consider {\bf a}). We use \eqref{averx} that gives
\begin{equation} \label{aa5}
\gamma_i \langle x_i \rangle_t = \langle \phi_i(v) \rangle_t -\mu_i
+ \xi_i(t)
\end{equation}
where $\xi_i(t) \to 0$ as $t\to \infty$. Since $\frac{ r_i v} {K_{i}
+ v}  > \frac{r_i }{K_{i}+ S} v,$ we find from \eqref{aa5} that
$$
\gamma_i  \langle x_i \rangle_t \ge  \frac{r_i }{K_{i}+ S} \langle
v \rangle_t -\mu_i + \xi_i(t).
$$
Now we use \eqref{vvv2}  and then  {\bf a})  is obtained  from the
last inequality as $t \to \infty$.

Let us consider {\bf b}). Let $v(0)$ be small enough such that
$$
\phi_i(v(0)) - \mu_i=-2\kappa_i < 0
$$
holds for all $i=1,\ldots , M$.  Note that $v_l(t) < D_l S_l t +
v_l(0)$.
Then there exists $\tau>0$ such that
$$
\phi_i(v(t)) - \mu_i < -\kappa_i < 0 \quad  t \in (0, \tau).
$$
Therefore,
$$
x_i(t) < x_i(0) \exp(-\kappa_i t), \quad  t \in (0, \tau)
$$
Therefore, if all $x_i(0)$ are sufficiently close to
$X_{\mathrm{ext}}^{(i)}$ all the species
extinct.}
\end{proof}

To illustrate the dependence of the biodiversity  the fundamental
parameters, let us consider an example (see also Example~\ref{ex2}
below).  Let $K_i=K$ for all $i=1, \ldots, M$ and $a_i=\mu_i/r_i$.
Then
$\lambda_i =K({1- a_i})^{-1}$. Therefore, according to
Proposition~\ref{prop3} all species $x_j$ with
\begin{equation} \label{MMM66}
a_j \bigl(1 + \frac{S}{K}\bigr) < \min_{i=1,..., M} ({1- a_i})^{-1}
\end{equation}
survive. The set of such species can have the maximal cardinality
$M$.
Indeed, let $a_i=\frac12 +\tilde a_i$, where $0 <\tilde a_i<\frac12$
and let us assume that $S/K < 1$. Then the right hand side of
\eqref{MMM66} will  be less than $2$. Therefore,  the species with
the property
$$
\tilde a_i <  2\biggl(1+ \frac{S}{K}\biggr)^{-1} -\frac12
$$
do certainly survive. Thus, if all $\tilde a_i$ are small enough,
all the species survive.


\subsection{Three  examples } \label{2ex}

We illustrate the obtained results by simple examples relating them
to the (unified) neutral theory. Recall that neutrality means that
at a given trophic level in a food web, species are equivalent in
birth rates, death rates, dispersal rates and speciation rates, when
measured on a per-capita basis. Mathematically this means that all
basic parameters are almost equal, see \cite {Hubb}. In the all example we suppose
that $D_k$ are large.

\begin{example}  \label{ex1}
For large $d=\min_k D_k$ system  (\ref{Starvd})  becomes a fixed
point problem with a contraction operator, therefore  its solution
$v^{\mathrm{eq}}$ is uniquely determined and
 \begin{equation}\label{asym2}
v_k^{\mathrm{eq}}= S_k -D_k^{-1}  \sum_{i=1}^{M}  c_{ki}
\phi_i(S)  \gamma_i^{-1} (\phi_i(S) -\mu_i)_{+}  + O(d^{-2}).
\end{equation}
By Theorem~\ref{Theor1} all solutions $(x,v)$ of the  system
(\ref{HX1}), (\ref{HV1}) with the Cauchy data (\ref{K16a}) have a
limit
\begin{align*}
v_k(t)&\to S_k+O(d^{-1}),\;\,k=1,\ldots,m,\\
x_i(t)&\to\frac{\phi_i(S)-\mu_i}{\gamma_i}+O(d^{-1}).
\end{align*}
So, if $d$ is sufficiently large the system is persistent. Moreover,
the number $N_{\mathrm{max}}$ is equal approximately to the
cardinality
$$
\Big|\{i\,:\,\frac{\phi_i(S)-\mu_i}{\gamma_i}>X_{\mathrm{ext}}^{(i)}\}\Big|.
$$
\end{example}

\begin{example}[The case of a single resource]\label{ex2}
In the case $m=1$  we set
$\phi_i=\frac{r_i v}{K_i + v}$ and suppose that
$$
\mu_i =\mu, \quad  K_i=K, \quad r_i=r, \quad \gamma_i=  \gamma,
\quad  \bar C=N_e^{-1}\sum_{i=1}^{N_e} c_i
$$
and $X_{\mathrm{ext}}^{(i)}=X_{\mathrm{ext}}$. Let us introduce
parameters
\begin{equation} \label{par}
p=\frac{\mu}{r}, \quad  \tilde S=\frac{S}{K}, \quad  \epsilon=
\frac{\gamma X_{\mathrm{ext}}}{r},\quad R=\frac{K D\gamma}{ r^{2}
\bar C^{1}}.
\end{equation}
We consider
$
v^{\mathrm{eq}}=Ku,
$
where $u$ is to be determined. Then  \eqref{Starvd} becomes
\begin{equation}
  R (\tilde S -u)=N_e F(u),
\label{1Res}
     \end{equation}
where
$$
F(u)=\frac{u}{1+u}\Big(\frac{u}{1+u} - p \Big)_{+, \epsilon}.
$$
  Let
\begin{equation}\label{ueps}
 u_{\epsilon}=\frac{p +\epsilon }{1 - p - \epsilon}.
\end{equation}
We seek  a nontrivial solution of \eqref{1Res} such that $u >
u_{\epsilon}$.
Equation    \eqref{1Res}  implies
\begin{equation}\label{Nf1}
N_*(v^{\mathrm{eq}})  \le N_e \le N_*(v^{\mathrm{eq}})+1,
\end{equation}
where
\begin{equation}\label{Nf1a}
N_*(v)=\left [\frac{ R ( S - v)(K+v)}    {Kv(\frac{v}{K+v} - p)_{+,
\epsilon}}\right]
\end{equation}
and $[x]$ is the floor of $x$. Note that the function
$N_*(v^{\mathrm{eq}})$
is decreasing in $v^{\mathrm{eq}}$. Then since $u >
u_{\epsilon}$ ,  relations  \eqref{ueps} and \eqref{Nf1} allows us
to conclude  that  the maximal possible biodiversity
$N_{\mathrm{max}}$ satisfies
 \begin{equation} \label{mxdiv}
  N_*(K u_{\epsilon})<N_{\mathrm{max}}< N_*(K u_{\epsilon}) +1.
\end{equation}
In the Section~\ref{Rstar} we shall see that this maximum of
biodiversity is realizable for some initial data.

\end{example}

\begin{example}[The multi-resource case]
We assume that $m\ge 2$ and that  the ecosystem is in an equilibrium
state defined by \eqref{sur66e}, \eqref{Starvd} and  \eqref{FNeq}.
We also assume that we are in the neutral position, i.e.
\begin{equation}
     K_{ki}=K, \quad  r_{i}=r,  \ \mu_i=\mu,  \ \gamma_i=\gamma, \
     X_{\mathrm{ext}}=X_{\mathrm{ext}}^{(i)}.
\label{exam21}
\end{equation}
Then setting $\phi(z):=\frac{r z}{K+ z}$ we obtain
\begin{equation}
     \phi_{i}(v)=\min_k \phi(v_k)=\phi(w),
\label{exam22}
\end{equation}
where $  w:=\min_{k} v_k=v_{k^*}$. Then the equilibrium abundances
and resources are determined respectively by
\begin{align}
    x_{i}(w)&=\gamma^{-1} ( \phi(w) - \mu)_{+, \gamma
    X_{\mathrm{ext}}}, \quad 1\le i\le N_e,
\label{exam23}\\
    v_{k}^{\mathrm{eq}}(w)&=\gamma^{-1} D_k \sum_{i=1}^{N_e} c_{ki}
    \phi(w)( \phi(w) - \mu)_{+, \gamma X_{\mathrm{ext}}},\quad 1\le
    k\le m.
\label{exam24}
\end{align}
Since $w=v_{k_*}^{\mathrm{eq}}(w)=v_{k^*}$, we obtain
\begin{equation}
   w=\gamma^{-1} D_{k_*} \sum_{i=1}^{N_e} c_{k_* i} \phi(w)( \phi(w)
   - \mu)_{+, \gamma X_{\mathrm{ext}}}.
\label{exam25}
\end{equation}
The system \eqref{exam23}, \eqref{exam24} and \eqref{exam25}
determines an equilibrium state depending on the index  $k_*\in\{1,
2, \ldots , m\}$. For any fixed $k_*=1, 2, \ldots , m$, we solve
\eqref{exam23}, \eqref{exam24} and \eqref{exam25}. If
$v_k^{\mathrm{eq}} < w$ is valid for all $k \ne k_*$ then $k_*$ is
found.

Using change of the variables $w=Ku,$ where $u$ is a new variable,
we obtain from \eqref{exam25} a similar relation \eqref{1Res} as in
Example ~\ref{ex2}, where all parameters but $R$ are defined by
\eqref{par},  and $R=K D_{k_*}\gamma r^{-2} \bar C_m^{-1}$,
where
$$
 \bar C_m=M^{-1}\sum_{i=1}^M c_{k_*, i}.
$$
Then
\begin{equation}\label{Nf2}
N_*(w)  \le N_e \le N_*(w)+1,
\end{equation}
where $N(v)$ is defined by \eqref{Nf1a}.
\end{example}

\section{Asymptotically sharp estimate of biodiversity for systems
with random parameters}
\label{Rstar}

In the remaining part  of the paper we consider ecosystems with random  parameters for large values of $M$. Moreover, we suppose that the system is in an equilibiria state, i.e. oscillating large time regimes are absent.

First let us  suppose that at the initial moment there are $M\gg1$ species with different (possibly random)  parameters and that the resource supplies $S_k$ are limited. We are going to address the following problem: \textit{How many species $N_e$  will survive}?

It is clear that $N_e$ is a priori bounded by $ M$. Below we obtain
an asymptotically sharp estimate of $N_e$.
We shall assume  that $m=1$  and  the ecosystem has  a globally
convergent  equilibrium state,  $v=v^{\mathrm{eq}}$. We also assume
that $\phi_i$ are defined by the von Liebig law  \eqref{Liebm}.
Then, if the $i$-th species is survived for all times $t>0$, we have
\begin{equation}  \label{ass1}
    \frac{r_i v^{\mathrm{eq}}}{K_i + v^{\mathrm{eq}}} - \mu_i >
    \gamma_i X_{\mathrm{ext}}^{(i)}.
\end{equation}
   Consequently,
\begin{equation}  \label{ass2}
    v^{\mathrm{eq}} > \beta_i,    \quad  \beta_i=\frac{ (\mu_i +
    \gamma_i X_{\mathrm{ext}}^{(i)})K_i}{r_i - \mu_i -\gamma_i
    X_{\mathrm{ext}}^{(i)} },
\end{equation}
and
 \begin{equation}  \label{ass3}
    r_i > \mu_i + \gamma_i X_{\mathrm{ext}}^{(i)}.
\end{equation}
Note also that for $\gamma_i=0$ or $X_{\mathrm{ext}}^{(i)}=0$ the
numbers $\beta_i$ coincide with the parameters $\lambda_i$ defined
by \eqref{lambd}. They determine
survival of species in classical resource competition models without
extinction thresholds and without self-limitation \cite{Hsu05}.

We introduce parameters
\begin{equation}  \label{m3}
\eta_i=D^{-1}c_i(\gamma_i X_{\mathrm{ext}}^{(i)} +\mu_i)
X_{\mathrm{ext}}^{(i)}, \quad i=1,\ldots , M
\end{equation}
and describe the choice of  random values of the model parameters.
Under assumptions  \eqref{Liebm},
the main parameters of our model  are  the coefficients $ \mu_i,
\gamma_i$ and
the vectors $K^{(i)}=(K_{i1}, \ldots, K_{im})$,  where $i=1,\ldots ,
M$, and $r=(r_{1}, \ldots, r_{M})$.
As before, we consider $U_M=\big ({\mu}, \mathbf{K}, \mathbf{r},
\mathbf{c}, \Gamma \big) \in \mathbb{R}^{2M(m +2)}$. Note that
$c_i$ are species specific parameters which not necessary to be
included in  analysis, so we suppose that $c_i$ are fixed.

Let $U_M$ be a random vector with a probability density
function $\xi (U_M)$. This means that the values $U_M$ are defined
by random sampling, i.e. the parameters of the  species are  random
independent vectors that are drawn from the cone $\mathbb{R}^{2M(m
+2)}_+$ by the density $\xi$.

Our basic assumption to $\xi$ then can be formulated as follows:

\begin{assumption}\label{asum1}
The probability density function $\xi$ is a continuous function with
a compact support in $\mathbb{R}^n_+$. Moreover, as $N \to \infty$
the parameters $\beta_i$ are distributed on $(0, +\infty)$
with the smooth  probability density $\rho_0(\beta)$ such that
\begin{equation}  \label{ass4}
    \mathrm{supp}\,\, \rho_0(\beta) \subset (\beta_{\min},
    \beta_{\max}).
\end{equation}
The parameters $\eta_i$ are distributed on $(0, +\infty)$
with a continuous  probability density $\rho_1(\eta)$.  The
densities $\rho_0$ and $\rho_1$ are mutually independent.
\end{assumption}

From Assumption~\ref{asum1} it follows that
the function $\xi$ is positive on $S_{\xi}$, where $S_\xi$  is an
open bounded set.  Assumption~\ref{asum1} also yields that the
mortality rates do not approach  zero and resource consumption is
restricted.
It is supposed that initial data $\bar x_i=x_i(0)$ are random
mutually independent with the density distribution
		\begin{equation*}
     \bar x_{i}  \in  {\mathcal  X}(\bar X,
		 \sigma_{X})
\label{survX1}
\end{equation*}
with the mean $\bar X$ and the deviation $\sigma_{X}$.
The random assembly of the species defines an initial state of the
ecosystem for $t=0$.

Let us order $\beta_i$ so that
$$
\beta_{\min} \le  \beta_1 \le \beta_2 \le \ldots    \le \beta_M \le
\beta_{\mathrm{max}}.
$$
Then
 \begin{equation*}  \label{ass5}
    \beta_{N_e} < v^{\mathrm{eq}} < \beta_{N_e+1}.
\end{equation*}
Therefore, we obtain
 \begin{equation}  \label{ass6+}
    \beta_{N_e+1} > S - D^{-1}\sum_{i=1}^{N_e} c_i \frac{r_i
    v_{eq}}{K_i + v^{\mathrm{eq}}} (\frac{r_i v^{\mathrm{eq}}}{K_i +
    v_{eq}} -\mu_i) \gamma_i^{-1},
\end{equation}
\begin{equation}  \label{ass6-}
    \beta_{N_e} < S - D^{-1} \sum_{i=1}^{N_e} c_i \frac{r_i
    v_{eq}}{K_i + v^{\mathrm{eq}}} (\frac{r_i v^{\mathrm{eq}}}{K_i +
    v_{eq}} -\mu_i) \gamma_i^{-1},
\end{equation}
where by Assumption~\ref{asum1} one has  that $\delta_{\beta} =
\beta_{N_e+1}- \beta_{N_e}=O(M^{-1})$ as $M \to +\infty$.
Therefore, the last two inequalities are asymptotically exact.

\begin{lemma} \label{lemma2}
 {Let us define the probability $\mathrm{Pr}_{\beta}$ by
\begin{equation*}  \label{prob}
    \mathrm{Pr}_{\beta}=\Prob \{ \beta_{n} - \beta_{\min} <
    M^{-1/3} \}
\end{equation*}
Then for sufficiently large $M$
\begin{equation*}  \label{prob1}
    \mathrm{Pr}_{\beta} > 1- C_n  \exp(- c_n M^{-1/6}),
\end{equation*}
where $C_n, c_n$ are positive constants.
}
\end{lemma}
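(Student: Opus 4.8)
The plan is to recognize $\mathrm{Pr}_{\beta}$ as the probability that the minimum of $n$ i.i.d.\ samples drawn (for large $M$) from the density $\rho_0$ lands within $M^{-1/3}$ of the left endpoint $\beta_{\min}$ of the support. First I would fix a small $\delta$ with $0<\delta<1/3$, and use Assumption~\ref{asum1}: since $\rho_0$ is smooth with $\mathrm{supp}\,\rho_0\subset(\beta_{\min},\beta_{\max})$, there is a constant $a>0$ and a neighbourhood of $\beta_{\min}$ on which $\rho_0(\beta)\ge a(\beta-\beta_{\min})$ near the endpoint, or at the very least $\int_{\beta_{\min}}^{\beta_{\min}+h}\rho_0(\beta)\,d\beta\ge c\, h^{2}$ for small $h$ (more than enough; even a linear lower bound $\ge c h$ works and is what one typically gets from smoothness plus the support condition). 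Denote this tail mass by $p_M(h):=\Prob\{\beta_1-\beta_{\min}<h\}$, so $p_M(M^{-1/3})\ge c\, M^{-1/3}$ for $M$ large.

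Next I would compute the complementary event directly. The event $\{\beta_{(1)}-\beta_{\min}\ge M^{-1/3}\}$, where $\beta_{(1)}=\min_i\beta_i$ over the relevant block of $n$ samples, has probability $\bigl(1-p_M(M^{-1/3})\bigr)^{n}\le \exp\bigl(-n\, p_M(M^{-1/3})\bigr)\le \exp\bigl(-c\, n\, M^{-1/3}\bigr)$. This already gives an exponentially small bound. To match the stated form $1-C_n\exp(-c_n M^{-1/6})$ one only needs $n\,M^{-1/3}\ge c_n' M^{-1/6}$, i.e.\ $n\ge c_n' M^{1/6}$; for fixed $n$ this fails, so I suspect the intended reading is that $n$ itself grows (the statement carries subscripts $C_n,c_n$ but these are meant for the regime $n=n(M)\to\infty$, e.g.\ $n\asymp N_e$), and one should simply carry through with whatever growth of $n$ is available, absorbing constants. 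In the worst case where $n$ is genuinely fixed, the cleaner bound $1-C_n\exp(-c_n M^{1/3})$ holds and a fortiori the claimed $M^{-1/6}$ version (reading $M^{-1/6}$ as the weaker exponent) is true; I would state it in the form that the argument actually produces and note the constants depend on $\rho_0$ near $\beta_{\min}$ and on $n$.

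A subtle point I would address carefully: for finite $M$ the parameters $\beta_i$ are not exactly distributed according to $\rho_0$ — Assumption~\ref{asum1} only says this holds in the limit $M\to\infty$. So strictly one should phrase the tail mass bound as holding for $M$ large enough, using that the empirical/limiting density of the $\beta_i$ converges to $\rho_0$ and hence the small-$h$ integral is eventually bounded below by, say, $\tfrac{c}{2}h$. I would make this quantitative by invoking the compact support and continuity to get uniform convergence of the relevant distribution functions near $\beta_{\min}$, then run the Bernoulli-tail estimate above. The main obstacle is therefore not the probabilistic computation (which is a one-line $(1-p)^n\le e^{-np}$ bound) but bookkeeping the two approximations — finite-$M$ vs.\ limiting density, and the precise growth rate of $n$ needed to land on the exponent $M^{-1/6}$ rather than $M^{-1/3}$; I would resolve the latter by presenting the estimate with the honest exponent coming out of $n\,p_M(M^{-1/3})$ and remarking that \eqref{ass4} guarantees it dominates $M^{-1/6}$ in the regime of interest.
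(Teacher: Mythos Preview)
Your proposal misidentifies $\beta_n$ at the outset. In the paper the $\beta_i$ have already been ordered increasingly ($\beta_1\le\cdots\le\beta_M$), so $\beta_n$ is the $n$-th order statistic of the full sample of size $M$, not the minimum of a block of $n$ draws. Consequently the relevant complementary event is not ``all $n$ samples miss $[\beta_{\min},\beta_{\min}+\delta]$'' but rather ``fewer than $n$ of the $M$ samples land in $[\beta_{\min},\beta_{\min}+\delta]$''. This is exactly why your bound $(1-p)^n\le e^{-np}$ yields an exponent $n\,p_M(M^{-1/3})\to 0$ for fixed $n$ and forces you into speculation about $n$ growing with $M$: the correct exponent should carry a factor of $M$, not $n$.

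The paper's route is to write the complement as a binomial lower tail,
\[
1-\mathrm{Pr}_\beta\;=\;\sum_{k<n}\binom{M}{k}p_\delta^{\,k}(1-p_\delta)^{M-k}
\;\le\;\sum_{k<n}\frac{(Mp_\delta)^k}{k!}\,e^{-(M-k)p_\delta},
\]
with $\delta=M^{-1/3}$. Since $\rho_0$ is smooth with support contained in $(\beta_{\min},\beta_{\max})$, it vanishes at $\beta_{\min}$ and $p_\delta$ is of order $\delta^2=M^{-2/3}$, so $Mp_\delta\asymp M^{1/3}$ and each term decays like a Poisson tail $(Mp_\delta)^k e^{-Mp_\delta/2}$, exponentially small in a positive power of $M$. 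Two side remarks that also affect your write-up: (i) making the exponential factor small requires a \emph{lower} bound $p_\delta\ge c\delta^2$; because $\rho_0(\beta_{\min})=0$, the linear lower bound $p_\delta\ge ch$ you invoke is not available here, though the quadratic one suffices once the factor $M$ (rather than $n$) is in place; (ii) the $M^{-1/6}$ in the exponent---both in the statement and in the paper's displayed estimates---is evidently a misprint for a positive power of $M$; as written the bound is vacuous, and your attempt to reconcile it is chasing a typo rather than a genuine feature of the problem.
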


\begin{proof}
{Let us consider the probability $\mathrm{Pr}_{k, \delta}$ that a
random sample consisting of $M$ numbers $\beta_i$  containing
exactly $k$ numbers within the
interval $J=[\beta_{\min}, \beta_{\min}+ \delta]$, where $\delta$ is
a small positive number.  The probability that a random $\beta_i$
distrubuted according
to the density $\rho_0$ lies within $J$ is
$$
p_{\delta}= \int_{ \beta_{\min}}^{\beta_{\min}+ \delta} \rho_0(s)
ds.
$$
Since $\rho_0$ is smooth, the probability can be estimated as
$p_{\delta} < c \delta^2$. Let us define
$$
\mathrm{Pr}_{k, \delta}={M \choose k} p_{\delta}^k (1-
p_{\delta})^{M-k}.
$$
Then
\begin{equation*}  \label{prob2}
\mathrm{Pr}_{k, \delta} <  C_k (M p_{\delta})^k \exp( - 0.5M
p_{\delta}),
\end{equation*}
where the constants $C_k>0$  do not depend on $M$.
We set $\delta=M^{-1/3}$.  Then
\begin{equation}  \label{prob3}
\mathrm{Pr}_{k, \delta} <  C_k  M^{-k/6} \exp( - c(k) M^{-1/6}).
\end{equation}
We observe that
\begin{equation}  \label{prob4}
 \mathrm{Pr}_{\beta}=1 - \sum_{k=1}^n \mathrm{Pr}_{k, \delta}> 1 -
 n\max_{k\in \{1,\ldots ., n\}}  \mathrm{Pr}_{k, \delta}.
\end{equation}
For fixed $n$ and $M$ large enough one has
$$
\max_{k\in \{1,\ldots ., n\}}  \mathrm{Pr}_{k,
\delta}=\mathrm{Pr}_{n, \delta}.
$$
Combining this with \eqref{prob3} and \eqref{prob4} yields that for
sufficiently large $M$
$$
\mathrm{Pr}_{\beta}> 1 - n \mathrm{Pr}_{n, \delta} > 1 -  C_n
(M)^{-n/6} \exp( - c_n M^{-1/6}).
$$
The latter estimate proves \eqref{prob1}}.
\end{proof}

Proposition  \ref{prop2} shows that $N_e \ll  M$ and $\rho_0$ is a
smooth density. Therefore, by Lemma~\ref{lemma2} we conclude that
$$
\beta_{N_e} - \beta_{\min} <  M^{-1/3}
$$
and
$$
(\frac{r_i v_{eq}}{K_i + v_{eq}} -\mu_i)  = X_{\mathrm{ext}}^{(i)} +
O(M^{-1/3}),
$$
with a probability exponentially close to $1$ for large $M$. This
yields the following relation which is asymptotically sharp:
\begin{equation*}  \label{ass7}
    \beta_{\min}  + \sum_{i=1}^{N_e} \theta_i=S + O(M^{-1/3}).
\end{equation*}
Since the densities $\rho_0$ and $\rho_1$
are mutually independent, the sum in the left hand side of the last
equation can be replaced by $N_e \langle \theta \rangle$, where
$\langle \theta \rangle = E \theta$ is the averaged value of
$\theta$. Finally one has
  \begin{equation}  \label{assF}
    {N_e} =(S - \beta_{\min})  \langle \theta \rangle^{-1} +
    O(M^{-1/3}).
\end{equation}

The latter estimate considerably refines our previous result
\cite{KVU16}.  Moreover,  we conclude that the final state of
ecosystem originally consisting of many
species with random parameters can be described by the so-called
$R^*$ rule. The essential parameters $\beta_i$ are well localized in
a narrow domain
that follows from \eqref{ass6+} and \eqref{ass6-}
(see Fig. \ref{Fig2}) that confirms the $R^*$ rule. Recall that the
$R^*$ rule (also called the \textit{resource-ratio hypothesis}) is a
hypothesis in community ecology that attempts to predict which
species will become dominant as the result of competition for
resources. It predicts that if multiple species are competing for a
single limiting resource, then species, which survive at the lowest
equilibrium resource level,  outcompete all other species
\cite{Til2}. Note that  a generalized resource ratio value $\beta_i$
for small $\gamma_i X_{\mathrm{ext}}^{(i)}$ is
$$
\beta_i \approx \lambda_i,
$$
where $\lambda_i$ are defined by \eqref{lambd},
   and it involves the mortality,  sharpness of consuming rates, and
   consuming rates.
Note, however, that the proof of  \eqref{assF} cannot be extended
for $m>1$.

To conclude this section, let us remark that \eqref{assF}   improves
\eqref{averv61}. Indeed,   $N_e$ from \eqref{assF} satisfies
\eqref{averv61}. Moreover,
a  difference between those estimate is that in \eqref{assF} the
nominator $S$ is replaced by $S - \beta_{\min}$, therefore, for large
resource supply  these estimates
coincide.

\begin{figure}[t]
\includegraphics[width=0.5\linewidth]{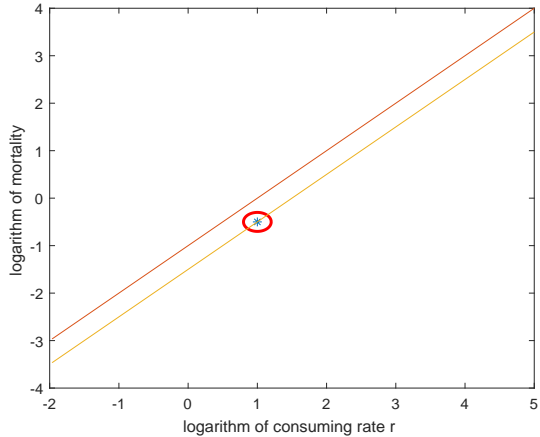}
\caption{This plot shows the case when an ecosystem in a neutral
state where all parameters are localized.
 We assume that the population parameters are subject to log-normal
 laws. The corresponding averages $E \ln r=1$ and $E \ln \mu=-0.5$.
 These parameters
are shown by  the star point in the center of a circle of the small
radius $0.5$.
All the species parameters are localized inside this circle.
The half-plane $\Omega_S$ consists of the points that lie below the
upper right straight line $L_S$ defined by eq. $y=x-S$.  As it follows from our analytical arguments,
if species parameters lie in $\Omega_S$ then that species survives; otherwise it would be extinct.  For $S=1$ all species survive. If resource supply $S$ slightly diminishes, the  line  $L_S$ shifts to the bottom and only the half species rest inside the small circle
the second half of the species disappears.}
\label{Fig2}
\end{figure}

\section{Extinctions and mass extinctions } \label{extmass}

\subsection{Mass extinctions }
Let us revisit our Examples~\ref{ex1} and \ref{ex2} above, which
describe  the localized case (Assumption~\ref{asum1}), where
parameters of all species are localized in a narrow domain that consistent with the $R^*$-rule. Note
that the nontrivial solution $u \ne 0$ of eq. \eqref{1Res} exists
under condition $u_{\epsilon} <
\tilde S$  thus
 \begin{equation*} \label{mxext}
  \frac{\mu +\gamma X_{\mathrm{ext}} }{ r - \mu   - \gamma
  X_{\mathrm{ext}}}  < S/K.
\end{equation*}

The violation of this condition leads to a mass extinction, when all
the species disappear.
In \cite{KVU16}, the following localization effect is
described: in a long time evolving species population, which was
randomly assembled at the
initial moment  (see Assumption~\ref{asum1} above),  all species
parameters converge to a limit value.  Then one can show that this
localized population state has an interesting property:

{\bf All species die together: } {\em  As a result of a long
evolution, initially randomly assembled  population approaches to a
neutral fitness invariant state, where, if an extinction occurs
then the extinction is a mass extinction, i.e. all species die
simultaneously.}

So, we conclude that
\begin{enumerate}

 \item extinctions are stronger if the variations of species
     parameters are smaller, i.e. localization of parameters is
     higher;

\item for well localized parameters ($\sigma_{\mu}$ and $\sigma_r$
    are small)  there is  a critical domain of parameter values
    when a small variation in the resource supply
$S_k$  can lead to   a mass extinction.

\end{enumerate}

These properties can be illustrated  by Fig. \ref{Fig2}.


We assume $m=1$ and  that coefficients $c_i, r_i, \gamma_i$ and
$\mu_i$ satisfy
\begin{align} \label{assum100}
C_- r<  &r_i  <C_{+} r, \quad C_- c <  c_i  <C_{+} c,  \quad 1 \le i
\le M,\\
\label{assum101}
C_- \gamma < &\gamma_i  <C_{+} \gamma, \quad C_- \mu <  \mu_i  <C_{+}
\mu, \quad 1 \le i \le M
\end{align}
where $a,  c, \gamma, r$ are characteristic values of the
corresponding coefficients, $C_{\pm}$ are positive constants
independent of
$M,r,c, \gamma, \mu$.
Let us introduce the stress parameter by

\begin{equation*} \label{Stress}
P_{\mathrm{stress}}= \frac{c r^2}{\gamma D S}.
\end{equation*}

Let us find a relation between that parameter and the biodiversity
robustness, which we define as
a coefficient $R_b$ in the relation
\begin{equation} \label{Rob}
\frac{\Delta N_e}{N_e}= R_b^{-1}\frac{ \Delta S}{S},
\end{equation}
where $\Delta S < 0$ is a small variation of the resource supply
$S$ caused for instance by a climate change, and
$\Delta N_e \le 0$ is the corresponding variation of the number of
coexisting species $N_e$.
We suppose
that relation  \eqref{assF}  is fulfilled. Then comparing
\eqref{assF} and \eqref{Rob} we see
that
$$
 \langle \theta \rangle =M^{-1} \sum_{i=1}^M \theta_i,
$$
where $\theta_i$ are defined by \eqref{m3}.  By  \eqref{m3} we note
that in the case
of large biodiversity $N_e \gg 1$ the coefficient $ \langle \theta
\rangle$ of
same order that $ SP_{\mathrm{stress}}$. In fact, if all $r_,
\gamma_i$ and $\mu_i$ of the same order (as it was supposed above,
in \eqref{assum100} and \eqref{assum101}), then $X_{\mathrm{ext}}$
has order $r/ \gamma$. Substituting this
relation in  \eqref{m3}, we obtain that $R_b$ has the same order $
SP_{\mathrm{stress}}$. Note that $P_{\mathrm{stress}}$
is a dimensionless parameter.

\section{Conclusions} \label{Concl}

The human activity affects ecosystems restricting   their resources
and producing climatic changes.  The key question is about
consequences of that activity, it will be an abrupt
change of biodiversity comparable with famous mass extinctions such
as the end-Permian extinction (252 million years ago (Ma)), and
even more severe the Phanerozoic (the past 542 Ma), or we will
observe a relatively smooth decline of biodiversity.

The history of the Earth system show that some changes were gradual,
but others were   sharp and produced catastrophic mass extinctions.
To understand that wait us in Future
we should investigate ecosystem biodiversity, find key factors
affecting this biodiversity and estimate the sensitivity of
ecosystems with respect to environmental parameters.

A number of  works are devoted to biodiversity problem beginning
with \cite{Volterra} (see, for example,  \cite{Hardin, Alles2,
Hu61, Til2, HuWe99, HuWe2001} this list does not pretend to be
complete).
However, mathematical methods developed up to now, does not allow us
to obtain explicit estimate of ecosystem biodiversity via the
fundamental and experimentally measurable ecosystem parameters.
The main difficulty is competitive exclusion principle. According to
\cite{Hsu05} for the case of one fixed resource supply we obtained
two situations. The species survival depends on ''break-even"
parameters
$\lambda_i$ (see also  \eqref{lambd} below). If $\lambda_1$ is less
than all the rest $\lambda_i$ only the species number $1$ survives.
To obtain coexistence of $m$
species observed in many natural resource competition ecosystems
(\cite{Hu61}) we must set $\lambda_1=\lambda_2=\ldots=\lambda_m$,
where we, however, can take $m$ in an arbitrary way (only to be
less than general species number).
To get around this difficulty a number of different approach were
suggested, for example, dynamical chaos in systems with more than
$2$ resources \cite{HuWe99, HuWe2001}, to take into account temporal
variations of
parameters etc. (see  for example \cite{Volm} for an overview) but
these ideas does not permit us  to obtain an explicit analytic
estimates for the number of coexisting species.

In this paper, we develop further two relatively new ideas: namely we take into account weak self-limitation effects \cite{Roy2007} and extinction thresholds \cite{KVU16}.  Although the modified models seem to be more
sophisticated, these effects allow us to  regularize the problem mathematically even in the case of a single  resource and also to obtain
explicit biodiversity estimates for diverse scenarios.

We also extend results \cite{Rothman} and show that
for plant or plankton systems,  the magnitude
of the future catastrophe depends not only on the level of climate
variations.
The structure of ecosystems also influences the number of species
that go to extinction. We establish estimates of biodiversity via
the system parameters such as the mortality rate, the consuming rate, teh sharpness of consuming rate, self-limitation  and the critical species abundance. Furthermore, we show that the ecosystems where these parameters are well localized are less stable that the ecosystems with  large variations in these parameters.  We also investigate how biodiversity depends on structure climate-ecosystem interactions.

In summary, or  main results are:

\begin{enumerate}

\item The dynamics of model \textit{without} extinctions is determined for large resource turnovers: we show that all trajectories converge to a unique  equilibrium, system has no memory and   forgets initial data completely.

\item For large resource turnovers, the  dynamics of the model \textit{with} extinctions is stable in a weaker sense: any trajectory goes to an equilibria which now may depend on the initial data. Thus, the ecosystem has memory  and the final equilibria depend on initial data.

\item For  models with extinctions, the ecosystem biodiversity can be
    explicitly estimated by a  relation involving only measurable parameters (turnover and mortality rates, resource supplies and the species content coefficients). This estimate is  asymptotically sharp under certain natural assumptions and in the case when the neutral theory correctly describes the ecosystem state. We show also that such a situation naturally occurs when the ecosystem is under stress, i.e. originally there exists species with randomly distributed  parameters and the most of them disappear as a result of resource limitations.

\item In the stress situation, we also show that the final state of the system (after extinctions) can be determined according to the $R^*$ rule.

\item The ecosystem robustness is investigated and conditions for mass
    extinctions are obtained. The robustness depends, in particular,  on the dimensionless stress parameter introduced in \cite{KVU16}. Systems with a large stress parameter is less sensitive with respect to resource decrease, however, if  an extinction is happened. Such an event is catastrophic and destroys  an essential  part of that ecosystem.

\end{enumerate}


\section*{Acknowledgments}

The authors would like to thank the reviewers for their detailed comments and suggestions for the manuscript.  We are also grateful to L.~
Mander for a question on extinction problem that have stimulated the
present research. The second author was supported by Link\"oping University, by  Government of Russian Federation, Grant 08-08
and via the RFBR grant  16-01-00648.


\bibliographystyle{plain}

\def\cprime{$'$}

\end{document}